\newcommand{\Reals}[1]{\mathbb{R}^{#1}}
\newcommand{\bigO}[1]{\mathrm{O}(#1)}
\newcommand{\littleO}[1]{\mathrm{o}(#1)}
\newtheorem{assumption}{Assumption}
\newtheorem{theorem}{Theorem}
\newtheorem{proposition}{Proposition}
\newtheorem{lemma}{Lemma}
\newtheorem{corollary}{Corollary}
\newcommand\defeq{\stackrel{\mathclap{\normalfont\mbox{}}}{=}}
\newcommand{\commentEq}[1]{
\hspace{10px}\text{ \footnotesize [{\em #1}]}}
\newcommand{\Ex}[1]{\mathrm{E}#1}
\newcommand{\ExCond}[2]{\Ex\left(#1|#2\right)}
\newcommand{\nn}{\nonumber}
\newcommand{\thetaBar}[1]{\overline{\theta}_{#1}}
\newcommand{\eps}{\epsilon}
\newcommand{\Fn}[1]{\mathcal{F}_{#1}}
\newcommand{\thetastar}{\theta_\star}
\newcommand{\thetarm}[1]{\theta_{#1}}
\newcommand{\thetaim}[1]{\theta_{#1}}
\newcommand{\m}[1]{\mathrm{\uppercase{#1}}}
\newcommand{\thetaMed}[1]{\theta_{#1}^+}
\definecolor{blue1}{HTML}{FFFFFF}
\definecolor{blue2}{HTML}{FFFFFF}
\small\citep{duchi2011adaptive}
\newcommand{\StateAssumptions}{Symbol $\|\cdot\|$ denotes the $L_2$ vector/matrix norm.
%$\Fn{n-1}$ is the $\sigma$-algebra adapted to iterate $\thetaim{0}$ and sequence $\{\Y{\thetaMed{i}}\}$, $i=0, 1, \ldots, n-1$,of implicit stochastic approximation \eqref{eq:implicitRM}.
%
% We define the error random variables at the intermediate iterate as $\varepsilon_n = H(\thetaMed{n}, \xi_n) -h(\thetaMed{n})$. Because $\theta_n^+$ is a deterministic function of $\theta_{n-1}$ it holds that $\ExCond{\varepsilon_n}{\Fn{n-1}} = 0$.
The parameter space for $\theta$ is $\Theta\subseteq\Reals{p}$, and is convex.
For positive scalar sequences $(a_n)$ and $(b_n)$, we write $b_n=\bigO{a_n}$ to express that $b_n \le c a_n$, for some fixed $c>0$, and every $n=1, 2,\ldots$; we write $b_n = \littleO{a_n}$ to express that $b_n/a_n\to0$ in the limit where $n\to\infty$.
Notation $b_n\downarrow 0$ means that $b_n$ is positive and decreasing towards zero.
To ensure existence of $\thetaMed{n}$ as a solution in Equation~\ref{eq:spp0},  we will assume throughout this paper that that a convex scalar potential, $\P$, exists such that $\nabla \P=h$.
This assumption is not strictly necessary. In Section~\ref{section:example}, 
for instance, we study a quantile regression problem where $h$ is scalar-valued and non-decreasing, 
which ensures the existence of $F$ and $\theta_n^+$. 
% Assuming a potential function is not particularly restrictive, however, and so we will retain this assumption throughout the paper.

Depending on which result we state, the stochastic proximal point 
algorithm operates under a combination of
the following assumptions.

%% Learning rates + potential
\begin{assumption}
\label{assumption:gamma}
It holds that $\gamma_n = \gamma_1 n^{-\gamma}$,
	$\gamma_1>0$ and $\gamma \in (0, 1]$.
\end{assumption}

\begin{assumption}
\label{assumption:lip}
Function $h$ is Lipschitz with parameter $L$, i.e., for all $\theta_1, \theta_2\in\Theta$,
\begin{align}
\|h(\theta_1)-h(\theta_2)\| \le L \|\theta_1-\theta_2\|.\nn
\end{align}
\end{assumption}

%% Convexity
\begin{assumption}
\label{assumption:convexity}
Function $h$ satisfies either
\begin{enumerate}[(a)]
\item \label{A:h_inward}
$(\theta-\thetastar)^\top h(\theta) \ge 0$, for all $\theta\in\Theta$;
%or, for all $n=1, 2, \ldots$;
\item \label{A:h_inwardstrict}
	$(\theta-\thetastar)^\top h(\theta) > 0$, for all $\theta\in\Theta\setminus\{\thetastar\}$;
	%or, for all $n=1, 2, \ldots$;
 \item \label{A:h_inward2}
$(\theta-\thetastar)^\top h(\theta) \ge \mu \|\theta-\thetastar\|^2$, for some fixed	  $\mu>0$, and all $\theta\in\Theta$.
	  % $\delta_n = \delta_1 n^{-\delta}$,  $\delta_1>0$ and $0<\gamma + \delta \le 1$.
\end{enumerate}
\end{assumption}

%\begin{assumption}
%\label{assumption:potential}
%There exists  $\P : \Reals{p} \to \Reals{}$
%such that $\nabla \P(\theta) = h(\theta)$, for all $\theta\in\Theta$.
%\end{assumption}

%% Errors
\begin{assumption}
\label{assumption:errors}
There exists fixed $\sigma^2>0$ such that, for all $n=1, 2, \ldots$,
\begin{align}
\Ex(\varepsilon_n | \Fn{n-1}) = 0,~\text{and}~\Ex(\|\varepsilon_n\|^2 | \mathcal{F}_{n-1}) \le \sigma^2.\nn
\end{align}
\end{assumption}

%% Lind
\begin{assumption}
\label{assumption:Lindeberg}
	Let $\Xi_n = \ExCond{\varepsilon_n
	\varepsilon_n^\top}{\mathcal{F}_{n-1}}$, then 
	$\|\Xi_n - \Xi\| \to 0$ for fixed positive-definite matrix $\Xi$.
Furthermore, if $\sigma_{n, s}^2 = \Ex(\mathbb{I}_{\|\varepsilon_n\|^2 \ge
	s/\gamma_n}\|\varepsilon_n\|^2)$, then for all $s>0$, $\sum_{i=1}^n
	\sigma_{i,s}^2 = \littleO{n}$ if $\gamma_n \propto n^{-1}$, or $\sigma_{n,
	s}^2 = \littleO{1}$ otherwise.
\end{assumption}
}
\newcommand{\assumeMain}{}
\newcommand{\aGn}{\ref{assumption:gamma}}
\newcommand{\assumeGn}{Assumption \aGn}
\newcommand{\aLip}{\ref{assumption:lip}}
\newcommand{\assumeLip}{Assumption \aLip}
\newcommand{\aConvex}{\ref{assumption:convexity}(\ref{A:h_inward})}
\newcommand{\aConvexStrict}{\ref{assumption:convexity}(\ref{A:h_inwardstrict})}
\newcommand{\assumeConvex}{Assumption \aConvex}
\newcommand{\assumeConvexStrict}{Assumption \aConvexStrict}
\newcommand{\aStrongConvex}{\ref{assumption:convexity}(\ref{A:h_inward2})}
\newcommand{\assumeStrongConvex}{Assumption \aStrongConvex}
\newcommand{\aErrors}{\ref{assumption:errors}}
\newcommand{\assumeErrors}{Assumption \aErrors}
\newcommand{\aLind}{\ref{assumption:Lindeberg}}
\newcommand{\assumeLind}{Assumption \aLind}
\newcommand{\TheoremConvergence}{
\vspace{5px}
\begin{theorem}
\label{theorem:convergence}
Suppose that Assumptions \aGn, \aLip, \aConvexStrict, and \aErrors\  hold. 
Then, the iterates $\thetaim{n}$ of 
the stochastic proximal point algorithm of Equation~\eqref{eq:spp} converges almost surely to $\thetastar$; i.e., $\thetaim{n} \to \thetastar$,
such that $h(\thetastar)=0$, almost surely.
\end{theorem}
% \vspace{5px}
}
\renewcommand{\thetaBar}[1]{\thetaMed{#1}}
\newcommand{\TheoremConvergenceProof}{
%% First proof convergence.
\TheoremConvergence
\begin{proof}
By Equation~\eqref{eq:spp}:
\begin{align}
\label{thm2:1}
\|\thetaim{n}-\thetastar\|^2 = \|\thetaim{n-1}-\thetastar\|^2 - 2\gamma_n (\thetaim{n-1}-\thetastar)^\top 
(h(\theta_n^+) + \varepsilon_n) + \gamma_n^2 \|h(\theta_n^+) + \varepsilon_n\|^2.
\end{align}
We use $(\thetaim{n-1}-\thetastar) = (\thetaBar{n} - \thetastar) + (\thetaim{n-1}-\thetaBar{n})$, and that $\theta_{n-1} - \theta_n^+ = \gamma_n h(\theta_n^+)$
from Equation~\eqref{eq:FP} in order to obtain:
\begin{align}
\label{thm2:2}
 R_n \triangleq \ExCond{(\thetaim{n-1}-\thetastar)^\top (h(\theta_n^+) + \varepsilon_n)}{\mathcal{F}_{n-1}}
& =  (\thetaBar{n}-\thetastar)^\top h(\thetaBar{n}) + 
	(\thetaim{n-1}-\thetaBar{n})^\top h(\thetaBar{n}) \nonumber \\
& = (\thetaBar{n}-\thetastar)^\top h(\thetaBar{n}) + 
	\gamma_n \|h(\thetaBar{n})\|^2 > 0.
\commentEq{by \assumeConvex}
\end{align}
Taking norms in Equation~\eqref{eq:FP} we obtain:
\begin{align}
\label{thm2:2ba}
\|\thetaim{n-1}-\thetastar\|^2 & = \|\thetaMed{n}-\thetastar\|^2
 + 2\gamma_n h(\thetaMed{n})^\top(\thetaMed{n}-\thetastar)
+\gamma_n^2 \|h(\thetaMed{n})\|^2,\nn\\
& > \|\thetaMed{n}-\thetastar\|^2.
\commentEq{by \assumeMain\assumeConvex}
\end{align}
It follows that
\begin{align}
\label{thm2:2b}
\|h(\thetaBar{n})\|  = \|h(\thetaMed{n})-h(\thetastar)\| 
&  \le L \|\thetaMed{n} - \thetastar\| 
\commentEq{by \assumeLip}\nn\\
& \le L \|\thetaim{n-1}-\thetastar\|.
\commentEq{by Inequality~\eqref{thm2:2ba}}
\end{align}

Furthermore, 
\begin{align}
\label{thm2:2c}
 \ExCond{\|h(\theta_n^+) + \varepsilon_n\|^2}{\mathcal{F}_{n-1}} & \le 2\|h(\thetaBar{n})\|^2 + 2\ExCond{\|\varepsilon_n\|^2}{\mathcal{F}_{n-1}} \nn\\
 & \le 2L^2 \|\thetaim{n-1}-\thetastar\|^2 + 2\sigma^2.
\commentEq{by Inequality~\eqref{thm2:2b} and \assumeMain\assumeErrors}
\end{align}

Taking expectations in Equation~\eqref{thm2:1} conditional on $\Fn{n-1}$, and using Equation~\eqref{thm2:2} and Inequality~\eqref{thm2:2c} we obtain
\begin{align}
\label{thm2:3}
\ExCond{\|\thetaim{n}-\thetastar\|^2}{\Fn{n-1}}
\le (1 + 2\gamma_n^2 L^2) \|\thetaim{n-1}-\thetastar\|^2 - 2\gamma_n R_n+ 2\gamma_n^2 \sigma^2.
\end{align}
We now use an argument---due to~\citet{gladyshev1965stochastic}--- 
that is also applicable to the classical Robbins-Monro procedure;
see, for example, \citet[Section 5.2.2]{benveniste1990adaptive}, 
or \citet[Theorem 1.9]{ljung1992stochastic}.
Random variable $R_n$ is positive by Inequality~\eqref{thm2:2}, 
and $\sum \gamma_i  = \infty$ and $\sum \gamma_i^2 < \infty$ by \assumeGn.
Therefore, we can invoke the supermartingale lemma of \citet{robbins1985convergence} to infer that $\|\thetaim{n}-\thetastar\|^2 \to B>0$  and  $\sum \gamma_n R_n < \infty$, almost surely.
If $B\ne0$ then $\lim \inf \|\thetaim{n}-\thetastar\| >0$, 
and thus the series $\sum_n \gamma_n R_n$ diverges by Inequality~\eqref{thm2:2}
and $\sum \gamma_i=\infty$ (\assumeGn). This is a contradiction. Thus, $B=0$. 
\end{proof}
}
\newcommand{\TheoremDeviance}{
\vspace{5px}
\begin{theorem}
\label{theorem:deviance}
Suppose that Assumptions~\aGn, \aLip, \aConvex, and \aErrors\ hold. Let $\Gamma^2 = \Ex{\|\thetaim{0}-\thetastar\|^2} + \sigma^2 \sum_{i=1}^\infty \gamma_i^2 + \gamma_1^2 \sigma^2$.
Then, if $\gamma \in (2/3, 1]$, there exists 
$n_{0,1} < \infty$ such that, for all $n > n_{0,1}$, the 
iterate $\theta_n$ of the stochastic proximal point algorithm of Equation~\eqref{eq:spp} satisfies:
%\begin{align}
%\Ex{H(\thetaim{n}) - H(\thetastar)} \le
%\begin{cases}
%\left[\frac{2\Gamma^2}{\gamma \gamma_1}+\littleO{1}\right] n^{-1+\gamma},~\text{if}~\gamma \in (2/3, 1];\\
%\left[\Gamma \sigma \sqrt{L\gamma_1} +\littleO{1}\right]n^{-\gamma/2},~\text{if}~\gamma \in (1/2, 2/3);\\
%\left[\frac{3 + \sqrt{9 + 4\gamma_1^3L\sigma^2/\Gamma^2}}{2\gamma_1/\Gamma^2} +\littleO{1}\right]n^{-1/3}
%,~\text{if}~\gamma =2/3.
%\end{cases}
%\end{align} 
\begin{align}
% \label{eq:theorem:nonasymptotic:H_convex}
\Ex(\P(\thetaim{n}) - \P(\thetastar)) \le \left[\frac{2\Gamma^2}{\gamma \gamma_1}+\littleO{1}\right] n^{-1+\gamma}.\nn
\end{align}
If $\gamma \in (1/2, 2/3)$, there exists 
$n_{0,2} < \infty$ such that, for all $n > n_{0,2}$, 
\begin{align}
% \label{eq:theorem:nonasymptotic:H_convex2}
\Ex(\P(\thetaim{n}) - \P(\thetastar)) \le \left[\Gamma \sigma \sqrt{L\gamma_1} +\littleO{1}\right]n^{-\gamma/2}.\nn
\end{align}
Otherwise, $\gamma=2/3$ and  there exists $n_{0,3} < \infty$ such that, for all $n > n_{0,3}$, 
\begin{align}
% \label{eq:theorem:nonasymptotic:H_convex2}
\Ex(\P(\thetaim{n}) - \P(\thetastar)) \le \left[\frac{3 + \sqrt{9 + 4\gamma_1^3L\sigma^2/\Gamma^2}}{2\gamma_1/\Gamma^2} +\littleO{1}\right]n^{-1/3}.\nn
\end{align}
\end{theorem}
%\vspace{5px}
}
\newcommand{\hMed}[1]{h_{#1}^{+}}
\newcommand{\hMedsq}[1]{{\hMed{#1}}^2}
\newcommand{\bias}[1]{\|\thetaim{#1}-\thetastar\|^2}
\newcommand{\TheoremDevianceProof}{
\TheoremDeviance
\begin{proof}
Note that
$\thetaBar{n} + \gamma_n h(\thetaBar{n}) = \thetaim{n-1}$ is equivalent to
$\thetaBar{n} = \arg \min_{\theta} \{ \frac{1}{2\gamma_n} \|\theta-\thetaim{n-1}\|^2 + \P(\theta)\}$.
%%%%
Therefore, comparing the values of the expression 
for $\theta = \thetaBar{n}$ and $\theta = \thetaim{n-1}$, we obtain
\begin{align}
\label{thm4:1}
\P(\thetaBar{n}) + \frac{1}{2\gamma_n} \|\thetaBar{n} - \thetaim{n-1}\|^2
\le \P(\thetaim{n-1}).
\end{align}
Since $\thetaim{n-1} - \thetaBar{n} = \gamma_n h(\thetaBar{n})$, Inequality~\eqref{thm4:1} 
can be written as 
\begin{align}
\label{thm4:2}
\P(\thetaim{n-1}) - \P(\thetaBar{n}) - \frac{1}{2} \gamma_n \|h(\thetaBar{n})\|^2 \ge 0.
\end{align}
Note that $\P(\theta_\star) \le \P(\theta)$, for all $\theta$. Thus, we have:
\begin{align}
\label{thm4:3}
\P(\thetaBar{n}) - \P(\thetastar) & \le h(\thetaBar{n})^\top (\thetaBar{n}-\thetastar) \commentEq{by convexity~\assumeConvex}\nn\\
\P(\thetaBar{n}) - \P(\thetastar) & \le \|h(\thetaBar{n})\| \cdot \|\thetaBar{n}-\thetastar\|\nn\\
[\Ex(\P(\thetaBar{n}) - \P(\thetastar))]^2 & \le [\Ex(\|h(\theta_n^+)\| \cdot \|(\theta_n^+ - \theta_\star\|)]^2\nn\\
[\Ex(\P(\thetaBar{n}) - \P(\thetastar))]^2 & \le \Ex(\|h(\thetaBar{n})\|^2) \Ex(\|\thetaBar{n} - \thetastar\|^2)
\commentEq{by Cauchy-Schwarz inequality}.
\end{align}
Therefore,
\begin{align}
\label{thm4:4}
\Ex(\bias{n}) & = \Ex(\|\thetaBar{n}-\thetastar\|^2) - 2\gamma_n \Ex((\thetaMed{n}-\thetastar)^\top \varepsilon_n) + \gamma_n^2 \Ex(\|\varepsilon_n\|^2)\nn\\
&=  \Ex(\|\thetaBar{n}-\thetastar\|^2) + \gamma_n^2 \Ex(\|\varepsilon_n\|^2)
 \nn\\
&\le \Ex(\bias{n-1}) + \gamma_n^2 \sigma^2.
\commentEq{by Inequality~\eqref{thm2:2ba} and \assumeMain\assumeErrors} \nn\\
& \le \Ex(\bias{0}) + \sigma^2 \sum_{i=1}^n \gamma_i^2.
\commentEq{by induction.}
\end{align}
For brevity, 
define $h_n =\Ex(\P(\thetaim{n})-\P(\thetastar))$ and $\hMed{n} = \Ex(\P(\thetaBar{n})-\P(\thetastar))$. It follows that $h_n >0, h_n^+ > 0$, everywhere. We want to derive a bound for $h_n$.
%
%
% By Equation~\eqref{thm4:3b}, $\thetaBar{n} = \thetaim{n} + \gamma_n \varepsilon_n$.
Since $\Ex(\varepsilon_n | \Fn{n-1}) = 0$, it follows from \assumeMain\assumeErrors\
that $\Ex(\|\thetaBar{n}-\thetastar\|^2) \le \Ex(\bias{n})  + \gamma_n^2 \sigma^2$. 
Using Inequality~\eqref{thm4:4}, we get
\begin{align}
\label{thm4:4b}
\Ex(\|\thetaBar{n}- \thetastar\|^2) & \le \Ex(\bias{0}) + \sigma^2 \sum_{i=1}^\infty \gamma_i^2 + \gamma_n^2 \sigma^2  \le \Gamma^2.
\end{align}
From Inequality~\eqref{thm4:3} and Inequality~\eqref{thm4:4b}, we get
\begin{align}
\label{thm4:6}
\Ex(\|h(\thetaBar{n})\|^2) \ge \frac{1}{\Gamma^2} [\Ex(\P(\thetaBar{n}) - \P(\thetastar))]^2 = \frac{1}{\Gamma^2} {\hMed{n}}^2.
\end{align}
Furthermore, by convexity of $F$,~\assumeMain\assumeConvex, and \assumeMain\assumeErrors, we have that
\begin{align}
\label{thm4:5}
\P(\thetaim{n})  & = \P(\thetaMed{n} - \gamma_n\varepsilon_n) \nn\\
\P(\thetaim{n}) & \le \P(\thetaBar{n}) - \gamma_n h(\thetaMed{n})^\top \varepsilon_n + \gamma_n^2 \frac{L}{2} \|\varepsilon_n\|^2 
\commentEq{by Lipschitz continuity}\nn\\
\P(\thetaim{n}) - \P(\thetastar)& \le \P(\thetaBar{n}) - \P(\thetastar)- \gamma_n h(\thetaMed{n})^\top \varepsilon_n + \gamma_n^2 \frac{L}{2} \|\varepsilon_n\|^2 
\nn\\
h_n & \le \hMed{n} + \gamma_n^2 \frac{L\sigma^2}{2}.
\commentEq{by taking expectations.}
\end{align}
Now, in Inequality \eqref{thm4:2}, we substract $\P(\thetastar)$ from the left-hand side, 
take expectations, and combine with Inequality~\eqref{thm4:6} to obtain
\begin{align}
\label{thm4:7}
h_{n-1} \ge \hMed{n} + \frac{1}{2 \Gamma^2} \gamma_n \hMedsq{n} \triangleq R_{\gamma_n}(\hMed
{n}).
\end{align}
Function $R_{\gamma_n}(x)$ defines a nondecreasing map, since its argument, $h_n^+$, is always positive. Let $R_{\gamma_n}^{-1}$ denote its inverse, which is also nondecreasing. Thus, we obtain $\hMed{n} \le R_{\gamma_n}^{-1}(h_{n-1})$. 
Using Equation \eqref{thm4:7}, we can rewrite Inequality~\eqref{thm4:5} as
\begin{align}
\label{thm4:8}
h_n \le R_{\gamma_n}^{-1}(h_{n-1}) +\gamma_n^2 \frac{L \sigma^2}{2}.
\end{align}

Inequality~\eqref{thm4:8} is our main recursion, since ultimately we want to upper-bound $h_n$. Our solution strategy is as follows.
We will try to find a base sequence $(b_n)$ such that 
$b_n \ge R_{\gamma_n}^{-1}(b_{n-1}) + \gamma_n^2 \frac{L \sigma^2}{2}$.
Since one can take $b_n$ to be increasing arbitrarily, we will try to 
find the smallest possible sequence $(b_n)$ that satisfies the recursion. To make our analysis more tractable we will 
search in the family of sequences $b_n = b_1 n^{-\beta}$, for various values 
$b_1, \beta>0$. Then, $b_n$ will be an upper-bound for $h_n$. To see this inductively, 
assume that $h_{n-1} \le b_{n-1}$ and that $h_n$ satisfies \eqref{thm4:8}.
Then, 
$h_n \le R_{\gamma_n}^{-1}(h_{n-1}) +\gamma_n^2 \frac{L \sigma^2}{2}
\le R_{\gamma_n}^{-1}(b_{n-1}) + \gamma_n^2 \frac{L \sigma^2}{2} \le b_n$, 
where the first inequality follows from the monotonicity of $R_{\gamma_n}$, 
and the second inequality follows from definition of $b_n$.

Now, the condition for $b_n$ can be rewritten as 
$b_{n-1} \le R_{\gamma_n}(b_n - \gamma_n^2 \frac{L \sigma^2}{2})$, and 
by definition of $R_{\gamma_n}$ we get
\begin{align}
\label{thm4:9}
b_{n-1} \le b_n - \gamma_n^2 \frac{L \sigma^2}{2} +\gamma_n \frac{1}{2\Gamma^2} (b_n - \gamma_n^2 \frac{L\sigma^2}{2})^{2}
\end{align}
Using $b_n = b_1 n^{-\beta}$ and $\gamma_n = \gamma_1 n^{-\gamma}$ (\assumeMain\assumeGn), we obtain
\begin{align}
\label{thm4:10}
b_1[(n-1)^{-\beta} - n^{-\beta}]  
+ \frac{L \sigma^2 \gamma_1^2}{2} n^{-2\gamma} 
+ \frac{L \sigma^2\gamma_1^3 b_1}{2\Gamma^2} n^{-\beta-3\gamma}
 - \frac{\gamma_1b_1^2}{2\Gamma^2} n^{- 2\beta-\gamma}
- \frac{L^2\sigma^4\gamma_1^5}{8\Gamma^2} n^{-5\gamma} \le 0.
\end{align}

We have $(n-1)^{-\beta} - n^{-\beta} < \frac{1}{1-\beta} n^{-1-\beta}$, 
for $n>1$. Thus, it suffices to have
\begin{align}
\label{thm4:11}
\frac{b_1}{1-\beta} n^{-1-\beta}
+ \frac{L \sigma^2 \gamma_1^2}{2} n^{-2\gamma} 
+ \frac{L \sigma^2\gamma_1^3 b_1}{2\Gamma^2} n^{-\beta-3\gamma}
 - \frac{\gamma_1 b_1^2}{2\Gamma^2} n^{-2\beta-\gamma}\le 0,
\end{align}
where we dropped the $n^{-5\gamma}$ term without loss of generality. 
The positive terms in Inequality \eqref{thm4:11} are $n^{-1-\beta}, n^{-2\gamma},$ and $n^{-\beta-3\gamma}$, and the only negative term is of order $n^{-2\beta-\gamma}$.
In order to find the largest possible $\beta$ to satisfy \eqref{thm4:11}, one needs to
equate the term $n^{-2\beta -\gamma}$ with the slowest possible term with a positive
coefficient, i.e., set $2\beta + \gamma = \min\{1+\beta, \beta + 3\gamma, 2\gamma\}$.
However, $\beta + 3\gamma > 1+\beta$ and $\beta + 3\gamma > 2\gamma$, 
and thus $2\beta+\gamma=\min\{1+\beta, 2\gamma\}$, which 
implies only three cases:
\begin{enumerate}[(a)]
\item $1+\beta < 2\gamma$, and thus 
$ 2\beta + \gamma = 1+\beta$, which implies $\beta = 1-\gamma$.
	Also, $1 + \beta  < 2\gamma \Rightarrow 2-\gamma  < 2 \gamma$, 
	and thus $\gamma  \in (2/3, 1]$. In this case, $b_1$ will satisfy \eqref{thm4:11} for all $n >n_{0,1}$, for some $n_{0,1}$, if
	\begin{align}
	\label{thm4:10}
	\frac{b_1}{1-\beta} < \frac{\gamma_1b_1^2}{2\Gamma^2} \Leftrightarrow
	b_1 >  \frac{2\Gamma^2}{\gamma \gamma_1}.
\end{align}

\item $2\gamma < 1+\beta$, and thus 
$ 2\beta + \gamma = 2\gamma $, which implies $\beta = \gamma/2$.
	Also, $1 + \beta > 2\gamma \Rightarrow 1+\gamma/2 > 2 \gamma$, 
	and thus $\gamma  \in (1/2, 2/3)$. In this case, $b_1$ 
	will satisfy \eqref{thm4:11} for all $n >n_{0,2}$, for some $n_{0,2}$, if
	\begin{align}
	\label{thm4:12}
	\frac{\gamma_1^2 L \sigma^2}{2} < \frac{\gamma_1b_1^2}{2\Gamma^2} \Leftrightarrow
	b_1 >  \Gamma \sigma \sqrt{L \gamma_1}.
	\end{align}

\item $2\gamma=1+\beta$, and thus $2\gamma= 1+\beta=2\beta + \gamma$, 
which solves to $\gamma=2/3$ and $\beta=1/3$. 
In this case, we need
\begin{align}
	\label{thm4:10a}
	\frac{b_1}{1-\beta} + \frac{\gamma_1^2 L \sigma^2}{2} < \frac{\gamma_1b_1^2}{2\Gamma^2}.
\end{align}
Because all constants are positive in Inequality~\eqref{thm4:10a}, including $b_1$, it follows that
\begin{align}
\label{thm4:10b}
b_1 > \frac{3 + \sqrt{9 + 4\gamma_1^3L\sigma^2/\Gamma^2}}{2\gamma_1/\Gamma^2}.
\end{align}

\end{enumerate}
\noindent{\em Remarks.} The constants $n_{0,1}, n_{0,2}, n_{0,3}$ depend on the problem parameters and the desired accuracy in the bounds of Theorem \ref{theorem:deviance}.
It is straightforward to derive exact values for them.
For example, consider case $(a)$ and assume we picked $b_1$ such that 
$ \frac{\gamma_1b_1^2}{2\Gamma^2} - \frac{b_1}{1-\beta}  = \epsilon>0$.
Ignoring the term $n^{-3\gamma-\beta}$ (for simplicity), 
Inequality~\eqref{thm4:11} becomes 
\begin{align}
\epsilon n^{-2+\gamma} \ge \frac{L\sigma^2\gamma_1^2}{2} n^{-2\gamma}
\Rightarrow n \ge (\frac{L\sigma^2\gamma_1^2}{2\epsilon})^c \equiv n_{0,1},
\end{align}
where $c=1/(3\gamma-2) >0$ since $\gamma \in (2/3, 1]$.
Parameter $n_{0,1}$ can therefore be set according to desired accuracy $\epsilon$.
Similarly, we can derive expressions for $n_{0,2}$ and $n_{0,3}$. 
\end{proof}
}
\newcommand{\TheoremMSE}{
\vspace{5px}
\begin{theorem}
\label{theorem:nonasymptotic}
Suppose that Assumptions~\aGn, \aStrongConvex, and \aErrors\ hold.
Let $\zeta_n = \Ex(\|\thetaim{n}-\thetastar\|^2)$ and define $\kappa= 1+2\gamma_1\mu$, 
where the $\theta_n$ is the $n$-th iterate of the stochastic proximal point algorithm of Equation~\eqref{eq:spp}.
Then, if $\gamma <1$, for every $n > 1$ it holds that
\begin{align}
\zeta_n \le \exp\{-\log \kappa \cdot n^{1-\gamma}\}  \zeta_0 + \sigma^2 \frac{\gamma_1 \kappa}{\mu} n^{-\gamma} + \bigO{n^{-\gamma-1}}.\nn
\end{align}
Otherwise, if $\gamma=1$, it holds that
\begin{align}
\zeta_n \le 
\exp\{-\log \kappa \cdot  \log n\} \zeta_0 +\sigma^2 \frac{\gamma_1 \kappa}{\mu} n^{-1} + \bigO{n^{-2}}.\nn
\end{align}
\end{theorem}
%\vspace{5px}
}
\newcommand{\TheoremMSEProof}{
\TheoremMSE
\begin{proof}
First we prove two lemmas that will be useful for Theorem \ref{theorem:nonasymptotic}.
\begin{lemma}
\label{lemma:decay_factor}
Consider a sequence $b_n$ such that $b_n \downarrow 0$
and $\sum_{i=1}^\infty b_i = \infty$. Then,
there exists a positive constant $K>0$, such that
\begin{align}
\label{eq:decay_factor}
\prod_{i=1}^n \frac{1}{1+b_i} \le \exp(-K  \sum_{i=1}^n b_i).
\end{align}
\end{lemma}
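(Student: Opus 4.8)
The plan is to prove the inequality $\prod_{i=1}^n (1+b_i)^{-1} \le \exp(-K\sum_{i=1}^n b_i)$ by taking logarithms and reducing it to a pointwise bound on each factor. Taking $-\log$ of both sides, the claim becomes $\sum_{i=1}^n \log(1+b_i) \ge K \sum_{i=1}^n b_i$, so it suffices to find a single constant $K>0$ such that $\log(1+x) \ge K x$ holds for every $x$ in the range actually taken by the $b_i$. The natural route is therefore to exhibit such a $K$ and then sum termwise.

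First I would observe that because $b_n \downarrow 0$, the sequence is bounded above by its first term $b_1$, so every $b_i$ lies in the compact interval $[0, b_1]$. On this interval I would compare $\log(1+x)$ with the linear function $Kx$. Since $\log(1+x)$ is concave and passes through the origin with slope $1$, while $Kx$ is the chord (or a sub-chord) from the origin, the inequality $\log(1+x) \ge Kx$ will hold on $[0,b_1]$ precisely when $K$ is at most the value of $\log(1+x)/x$ at the right endpoint, which is its minimum over the interval by concavity. Concretely I would set
\begin{align}
K \defeq \frac{\log(1+b_1)}{b_1},\nn
\end{align}
and verify that $K>0$ and that $\log(1+x) \ge K x$ for all $x \in [0,b_1]$; the verification is just the statement that the concave function $\log(1+x)$ lies above the chord joining $(0,0)$ to $(b_1,\log(1+b_1))$.

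Having fixed $K$, I would apply this pointwise bound with $x = b_i$ for each $i=1,\ldots,n$ (legitimate since $b_i \le b_1$), giving $\log(1+b_i) \ge K b_i$, and then sum over $i$ to obtain $\sum_{i=1}^n \log(1+b_i) \ge K\sum_{i=1}^n b_i$. Exponentiating the negative of this inequality yields exactly \eqref{eq:decay_factor}. The role of the hypothesis $\sum b_i = \infty$ is not needed for the inequality itself but clarifies that the bound is meaningful (the right-hand side tends to zero), so I would note it only as context rather than use it in the derivation.

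I do not anticipate a genuine obstacle here, since the argument is elementary; the only point requiring mild care is ensuring the constant $K$ is chosen uniformly in $n$ and depends only on $b_1$, not on the truncation level $n$. The concavity/chord comparison gives this uniformity for free, so the main ``work'' is simply to state the right constant and invoke concavity cleanly.
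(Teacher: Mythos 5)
Your proof is correct and follows essentially the same route as the paper: both fix $K=\log(1+b_1)/b_1$ and derive the pointwise bound $\log(1+b_i)\ge Kb_i$ from the fact that $\log(1+x)/x$ attains its minimum over $[0,b_1]$ at the right endpoint (your chord/concavity phrasing versus the paper's monotonicity of $\log(1+x)/x$), then combine over $i$. Your remark that $\sum b_i=\infty$ is not actually needed for the inequality is also accurate.
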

\begin{proof}
The function $x \log(1+1/x)$ is increasing-concave in $(0, \infty)$. 
From $b_n \downarrow 0$ it follows that $\log(1+b_n) / b_n$ is non-increasing.
Consider the value $K = \log(1 + b_1) / b_1$.
Then, $(1+b_n)^{-1} \le \exp(-K b_n)$.
Successive applications of this inequality
yields Inequality \eqref{eq:decay_factor}.
\end{proof}
%
%% Main recursion lemma.
\begin{lemma}[\cite{toulis2017aos}]
\label{lemma:implicit_recursion}
Consider sequences $a_n \downarrow 0, b_n \downarrow 0$, and $c_n \downarrow 0$ such that, $a_n = \littleO{b_n}$,  $\sum_{i=1}^\infty a_i = A < \infty$, and there is $n'$ such that $c_n/b_n < 1$ for all $n>n'$.
Define, 
\begin{align}
\label{new:defs}
\delta_n \triangleq \frac{1}{a_n} (a_{n-1}/b_{n-1} -a_n/{b_n}) \text{ and }
\zeta_n \triangleq \frac{c_n}{b_{n-1}} \frac{a_{n-1}}{a_n},
\end{align}
and suppose that $\delta_n \downarrow 0$ and $\zeta_n  \downarrow 0$.
Pick a positive $n_0$ such that $\delta_n + \zeta_n < 1$
and $(1+c_n)/(1+b_n) < 1$, for 
all $n \ge n_0$.\\
Consider a positive sequence $y_n>0$
that satisfies the recursive inequality,
\begin{align}
\label{ineq:implicit_recursion}
y_n \le \frac{1+c_n}{1 + b_n} y_{n-1}  + a_n.
\end{align}
Then, for every $n>0$,
\begin{align}
\label{eq:result:implicit_recursion}
y_n \le K_0 \frac{a_n}{b_n} + Q_{1}^n y_0 + Q_{n_0+1}^n (1+c_1)^{n_0} A,
 \end{align}
 where $K_0 = (1+b_1) \left(
 1-\delta_{n_0}  - \zeta_{n_0}\right)^{-1}$, $Q_i^n = \prod_{j=i}^n (1+c_i)/(1+b_i)$,
and $Q_i^n = 1$ if $n < i$, by definition.
\end{lemma}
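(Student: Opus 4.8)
The plan is to solve Ineq.\eqref{ineq:implicit_recursion} as a first-order linear recursion and then bound the resulting convolution sum. Writing $\rho_i \defeq (1+c_i)/(1+b_i)$ so that $Q_i^n = \prod_{j=i}^n \rho_j$, I would unroll $y_n \le \rho_n y_{n-1} + a_n$ all the way to $y_0$ to obtain
\begin{align}
y_n \le Q_1^n y_0 + \sum_{i=1}^n Q_{i+1}^n a_i,\nn
\end{align}
with the empty-product convention $Q_{n+1}^n = 1$. The term $Q_1^n y_0$ is already the second term of the target bound, so everything reduces to estimating the sum, which I would split at the index $n_0$ into a head $i \le n_0$ and a tail $i > n_0$.

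For the head, factoring $Q_{i+1}^n = Q_{i+1}^{n_0}\, Q_{n_0+1}^n$ and using $\rho_j \le 1+c_j \le 1+c_1$ (since $1+b_j \ge 1$ and $c_j$ is decreasing) gives $Q_{i+1}^{n_0} \le (1+c_1)^{n_0}$. Summing over $i \le n_0$ and invoking $\sum_{i=1}^{n_0} a_i \le A$ then yields $\sum_{i\le n_0} Q_{i+1}^n a_i \le Q_{n_0+1}^n (1+c_1)^{n_0} A$, which is exactly the third term of the claimed bound. This step is routine.

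The tail $T_n \defeq \sum_{i=n_0+1}^n Q_{i+1}^n a_i$ is where the real work lies, and I expect it to be the main obstacle. The key observation is that $T_n$ satisfies the same recursion with vanishing initial data, $T_n = \rho_n T_{n-1} + a_n$ and $T_{n_0}=0$, so I would prove $T_n \le K_0\, a_n/b_n$ by induction on $n \ge n_0$, the base case being trivial. Substituting the inductive hypothesis reduces the step to the inequality $\rho_n K_0\, a_{n-1}/b_{n-1} + a_n \le K_0\, a_n/b_n$. This is where the definitions \eqref{new:defs} of $\delta_n$ and $\zeta_n$ are engineered to make the difference collapse: using $a_{n-1}/b_{n-1} = a_n/b_n + a_n\delta_n$ and $c_n\,a_{n-1}/b_{n-1} = a_n\zeta_n$, a short computation gives the identity
\begin{align}
\frac{a_n}{b_n} - \rho_n \frac{a_{n-1}}{b_{n-1}} = \frac{a_n}{1+b_n}\left(1-\delta_n-\zeta_n\right).\nn
\end{align}
Hence the inductive step holds precisely when $K_0\,(1-\delta_n-\zeta_n)/(1+b_n) \ge 1$, i.e.\ $K_0 \ge (1+b_n)/(1-\delta_n-\zeta_n)$. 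Bounding the numerator by $1+b_1$ (as $b_n \le b_1$) and the denominator from below by $1-\delta_{n_0}-\zeta_{n_0} > 0$ (as $\delta_n,\zeta_n$ are decreasing and $\delta_{n_0}+\zeta_{n_0}<1$ by the choice of $n_0$) shows that $K_0 = (1+b_1)/(1-\delta_{n_0}-\zeta_{n_0})$ dominates the right-hand side uniformly over $n \ge n_0$, closing the induction. Combining the tail bound $T_n \le K_0\, a_n/b_n$ with the head bound and the $Q_1^n y_0$ term gives the stated inequality. The only delicate points are verifying the displayed identity and checking that the monotonicity hypotheses make $K_0$ a single bound valid for every $n \ge n_0$.
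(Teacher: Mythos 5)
Your proof is correct and rests on the same key inequality as the paper's, namely $K_0\bigl(\tfrac{a_n}{b_n} - \tfrac{1+c_n}{1+b_n}\,\tfrac{a_{n-1}}{b_{n-1}}\bigr) \ge a_n$ for $n \ge n_0$, which is exactly the paper's Ineq.~\eqref{ineq:an}, and your head bound for the indices $i \le n_0$ matches the paper's Ineqs.~\eqref{lemma1:q_bound}--\eqref{ineq0}. The only difference is bookkeeping: you apply the contraction inductively to the zero-initialized tail sum $T_n$, whereas the paper telescopes $s_n = y_n - K_0 a_n/b_n$ directly and splits on the sign of $s_n$; your variant avoids that case split but is otherwise the same argument.
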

%\begin{proof}
%See identical Lemma in Supplement of~\citep{toulis2017aos}.
%\end{proof}
%\pagebreak
%% Corollary (special case of Implicit Recursion lemma.)
%%
%%   To be used for the theory that follows.
\begin{corollary}
\label{corollary:implicit_recursion}
In Lemma \ref{lemma:implicit_recursion}
assume $a_n = a_1 n^{-\alpha}$ and $b_n = b_1 n^{-\beta}$, and $c_n=0$, 
where $\alpha > \beta$, and $a_1, b_1, \beta>0$ and $1 < \alpha < 1+\beta$. 
Then,
\begin{align}
\label{thm1:eq4}
y_n \le 2\frac{a_1 (1+b_1)}{b_1} n^{-\alpha + \beta} + \exp(-\log(1+b_1)n^{1-\beta}) [y_0 +(1+b_1)^{n_0}A],
 \end{align}
 where $n_0>0$ and $A=\sum_i a_i < \infty$.
\end{corollary}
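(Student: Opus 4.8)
The plan is to read Corollary \ref{corollary:implicit_recursion} as a direct specialization of Lemma \ref{lemma:implicit_recursion}: I first instantiate the abstract conclusion \eqref{eq:result:implicit_recursion} with the power-law sequences $a_n = a_1 n^{-\alpha}$, $b_n = b_1 n^{-\beta}$ and $c_n = 0$, and then reduce each of its three terms to the explicit form in the claim, using the estimate $\sum_{i=1}^n i^{-\beta} \ge n^{1-\beta}$ together with Lemma \ref{lemma:decay_factor} for the exponential factors.

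First I would check that the hypotheses of Lemma \ref{lemma:implicit_recursion} hold for these sequences. Since $c_n \equiv 0$, we have $\zeta_n \equiv 0$, so only $\delta_n \downarrow 0$ must be verified. Writing $\mu \defeq \alpha - \beta$, a direct computation gives $\delta_n = (n^{\alpha}/b_1)\,[(n-1)^{-\mu} - n^{-\mu}]$, and a mean-value estimate of the bracket yields $\delta_n = \bigO{n^{\beta-1}} \to 0$, where I use $\mu \in (0,1)$ (from $\alpha > \beta$ and $\alpha < 1+\beta$) and $\beta < 1$; the latter is also what makes $n^{1-\beta}$ a genuine decay exponent and, via $\sum b_i = \infty$, what licenses Lemma \ref{lemma:decay_factor}. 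The remaining conditions $a_n = \littleO{b_n}$ and $\sum a_i < \infty$ follow from $\alpha > \beta$ and $\alpha > 1$ respectively. With $c_1 = 0$ so that $(1+c_1)^{n_0}=1$ and $Q_i^n = \prod_{j=i}^n (1+b_j)^{-1}$, Lemma \ref{lemma:implicit_recursion} gives $y_n \le K_0\,a_n/b_n + Q_1^n y_0 + Q_{n_0+1}^n A$.

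Next I would simplify the three terms. For the leading term, because $\delta_n \downarrow 0$ and $\zeta_n = 0$, I can enlarge $n_0$ so that $\delta_{n_0} \le 1/2$; then $K_0 = (1+b_1)(1-\delta_{n_0})^{-1} \le 2(1+b_1)$, and since $a_n/b_n = (a_1/b_1)\,n^{-\alpha+\beta}$ this yields $K_0\,a_n/b_n \le 2\frac{a_1(1+b_1)}{b_1}\,n^{-\alpha+\beta}$, matching the first term of the claim. For the exponential factors, Lemma \ref{lemma:decay_factor} with $K = \log(1+b_1)/b_1$ gives $Q_1^n \le \exp(-K\sum_{i=1}^n b_i) = \exp(-\log(1+b_1)\sum_{i=1}^n i^{-\beta}) \le \exp(-\log(1+b_1)\,n^{1-\beta})$, using $\sum_{i=1}^n i^{-\beta} \ge n\cdot n^{-\beta} = n^{1-\beta}$. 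Writing $Q_{n_0+1}^n = Q_1^n \prod_{i=1}^{n_0}(1+b_i) \le Q_1^n(1+b_1)^{n_0}$ since $b_i \le b_1$, I obtain $Q_1^n y_0 + Q_{n_0+1}^n A \le \exp(-\log(1+b_1)\,n^{1-\beta})\,[y_0 + (1+b_1)^{n_0} A]$, which is exactly the second term of the claim; combining the two contributions finishes the proof.

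The main obstacle I anticipate is bookkeeping rather than conceptual difficulty: I must keep the choice of $n_0$ consistent across its two roles, namely the strengthened requirement $\delta_{n_0} \le 1/2$ that produces the constant $2$ in $K_0$, and the same $n_0$ that generates the factor $(1+b_1)^{n_0}$ in the tail. I should also take care to justify the monotone decrease $\delta_n \downarrow 0$ (not merely $\delta_n \to 0$) from the power-law form, since Lemma \ref{lemma:implicit_recursion} assumed this monotonicity as a hypothesis rather than derived it.
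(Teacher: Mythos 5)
Your proposal is correct and follows essentially the same route as the paper: instantiate Lemma \ref{lemma:implicit_recursion} with the power-law sequences, bound $\delta_n = \bigO{n^{\beta-1}}$ to choose $n_0$ with $\delta_{n_0}\le 1/2$ and hence $K_0 \le 2(1+b_1)$, and control the products $Q_1^n$ and $Q_{n_0+1}^n$ via Lemma \ref{lemma:decay_factor} together with $\sum_{i=1}^n i^{-\beta}\ge n^{1-\beta}$. Your closing observations (the implicit need for $\beta<1$, and that monotone decrease of $\delta_n$ should be checked rather than assumed) are fair points that the paper glosses over, but they do not change the argument.
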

\begin{proof}
In this proof, we will assume, for simplicity,
$(n-1)^{-c} - n^{-c} \le n^{-1-c}$, $c \in (0, 1)$, for every $n>0$.
It is straightforward to derive an appropriate bound for each value of $c$.
Furthermore, we assume $\sum_{i=1}^n i^{-\gamma} \ge n^{1-\gamma}$, 
for every $n>0$.
Formally, this holds for $n \ge n'$, where $n'$ in practice is very small (e.g., $n'=14$ if $\gamma=0.1$, 
$n'=5$ if $\gamma=0.5$, and $n'=9$ if $\gamma=0.9$, etc.)\newline
By definition,
\begin{align}
\delta_n = \frac{1}{a_n} (\frac{a_{n-1}}{b_{n-1}}-\frac{a_n}{b_n}) & = 
\frac{1}{a_1 n^{-\alpha}} \frac{a_1}{b_1} ((n-1)^{-\alpha+\beta} - n^{-\alpha+\beta}) \nn\\
& = \frac{1}{n^{-\alpha} b_1} [(n-1)^{-\alpha+\beta} - n^{-\alpha+\beta}]\nn\\
& \le \frac{1}{b_1} n^{-1+\beta}.
\end{align}
Also, $\zeta_n = 0$ since $c_n=0$.
We can take $n_0 = \lceil (2/b_1)^{1/(1-\beta)} \rceil$,
for which $\delta_{n_0} \le 1/2$.
Therefore, 
$K_0 = (1+b_1)(1-\delta_{n_0})^{-1} \le 2 (1+b_1)$;
we can simply take $K_0 =2(1+b_1)$.
Since $c_n =0$, $Q_{i}^n = \prod_{j=i}^n (1+b_i)^{-1}$.
Thus, 
\begin{align}
\label{cor:ineqs}
Q_1^n & \ge (1+b_1)^{-n}, \text{ and } \nn \\
Q_1^n & \le \exp(-\log(1+b_1)/b_1 \sum_{i=1}^n b_i),
 \commentEq{by Lemma \ref{lemma:decay_factor}.} \nonumber \\
 Q_1^n & \le  \exp(-\log(1+b_1) n^{1-\beta}).
 \commentEq{because $\sum_{i=1}^n i^{-\beta} \ge n^{1-\beta}$.}
\end{align}
 Lemma \ref{lemma:implicit_recursion} and Ineqs. 
 \eqref{cor:ineqs} imply
\begin{align}
y_n & \le K_0 \frac{a_n}{b_n} + Q_{1}^n y_0 + Q_{n_0+1}^n (1+c_1)^{n_0} A 
\commentEq{by Lemma  \ref{lemma:implicit_recursion} }
\nn \\
 & \le 2\frac{a_1 (1+b_1)}{b_1} n^{-\alpha + \beta} + Q_1^{n} [y_0 + (1+b_1)^{n_0}A] 
 \commentEq{by Ineqs. \eqref{cor:ineqs}, $c_1=0$}
 \nn \\
&  \le  2\frac{a_1 (1+b_1)}{b_1} n^{-\alpha + \beta} + \exp(-\log(1+b_1)n^{1-\beta}) [y_0 +(1+b_1)^{n_0}A],
\end{align}
where the last inequality also follows from Ineqs. \eqref{cor:ineqs}.
\end{proof}
%\pagebreak
%
\noindent 
\textbf{Proof of Theorem \ref{theorem:nonasymptotic}.}
Now we are ready to prove the main theorem.
%%%
By definition, 
$\thetaim{n} = \thetaMed{n} - \gamma_n \varepsilon_n$, and thus, by 
\assumeErrors,
\begin{align}
\label{thm3:1}
\Ex(\|\thetaim{n}-\thetastar\|^2) \le \Ex(\|\thetaMed{n}-\thetastar\|^2) + \gamma_n^2 \sigma^2.
\end{align}
Also by definition we have $\gamma_n h(\thetaBar{n}) + \thetaBar{n} = \thetaim{n-1}$,
and thus
\begin{align}
\label{thm3:3}
\|\thetaim{n-1}-\thetastar\|^2 & = \|\thetaMed{n}-\thetastar\|^2 + 2\gamma_n(\thetaMed{n}-\thetastar)^\top h(\thetaMed{n}) + \gamma_n^2\|h(\thetaMed{n})\|^2.
\end{align}
Therefore,
\begin{align}
\label{thm3:4}
 \|\thetaBar{n}-\thetastar\|^2 + 2\gamma_n (\thetaBar{n}-\thetastar)^\top h(\thetaBar{n}) & \le \bias{n-1} \nn\\
 \|\thetaBar{n}-\thetastar\|^2 + 2\gamma_n \mu \|\thetaBar{n}-\thetastar\|^2 
 & \le \bias{n-1} 
 \commentEq{by \assumeMain\assumeStrongConvex} \nn\\
\|\thetaBar{n} - \thetastar\|^2 & \le \frac{1}{1 + 2\gamma_n \mu} \bias{n-1}. &
\end{align}

Combining Inequality~\eqref{thm3:1} and Inequality~\eqref{thm3:4} 
yields
\begin{align}
\label{thm3:5}
\Ex(\|\thetaim{n}-\thetastar\|^2)
& =  \Ex(\|\thetaMed{n}-\thetastar\|^2)+ \gamma_n^2 \sigma^2\nn\\
& \le \frac{1}{1 + 2\gamma_n \mu} \Ex(\|\thetaim{n-1}-\thetastar\|^2)+ \gamma_n^2\sigma^2.
\end{align}
The final result of Theorem \ref{theorem:nonasymptotic} is obtained 
through a direct application of Corollary \ref{corollary:implicit_recursion} 
on recursion \eqref{thm3:5}, by setting $y_n \equiv \Ex{\|\thetaim{n}-\thetastar\|^2}$, 
$b_n\equiv 2\gamma_n\mu$, and $a_n\equiv \gamma_n^2\sigma^2$.
The case where $\gamma=1$ only changes Inequality~\eqref{cor:ineqs} by replacing $\sum b_i$ with $\log n$.
\end{proof}
}
\newcommand{\TheoremNormality}{
\vspace{5px}
\begin{theorem}
\label{theorem:normality}
Suppose that Assumptions \aGn,\aLip, \aConvex, \aErrors, and \aLind\ hold, and that $(2\gamma_1J_h(\thetastar)-I)$ is positive-definite, where 
$J_h(\theta)$ is the Jacobian of $h$ at $\theta$, and $I$ is the $p\times p$ identity matrix.
Then, $\thetaim{n}$ of the stochastic proximal point algorithm of Equation~\eqref{eq:spp}
is asymptotically normal:
\begin{align}
n^{\gamma/2} (\thetaim{n}-\thetastar) \to \mathcal{N}_p(0, \Sigma).\nonumber
\end{align}
The covariance matrix $\Sigma$ is the unique solution of 
\begin{align}
(\gamma_1 \m{J}_h(\thetastar) - I/2)  \Sigma + \Sigma (\gamma_1\m{J}_h(\thetastar)-I/2)= \Xi.\nn
\end{align}
A closed-form solution for $\Sigma$ is possible if $\Xi$ commutes with $J_h(\thetastar)$, such that $\Xi J_h(\theta_\star) = J_h(\theta_\star) \Xi$.  Then, $\Sigma$ can be derived as
$\Sigma = (2\gamma_1J_h(\thetastar)-I)^{-1} \Xi$.
\end{theorem}
%\vspace{5px}
}
\newcommand{\TheoremNormalityProof}{
\TheoremNormality
\begin{proof}
Convergence of $\thetaim{n} \to \thetastar$ is established from 
Theorem \ref{theorem:convergence}.
By definition of the stochastic proximal point algorithm in Equation~\eqref{eq:spp},
\begin{align}
\label{thm5:1}
& \thetaim{n} = \thetaim{n-1} - \gamma_n (h(\thetaBar{n}) + \varepsilon_n), \text{ and }\\
\label{thm5:2}
& \thetaBar{n} + \gamma_n h(\thetaBar{n}) = \thetaim{n-1}.
\end{align}
We use Equation \eqref{thm5:2} and expand $h(\cdot)$ to obtain
\begin{align}
\label{thm5:3}
h(\thetaBar{n}) & = h(\thetaim{n-1}) - \gamma_n J_h(\thetaim{n-1}) h(\thetaBar{n}) + \epsilon_n \nn\\
h(\thetaBar{n}) & = \left( I + \gamma_n J_h(\thetaim{n-1})\right)^{-1} h(\thetaim{n-1})
+  \left( I + \gamma_n J_h(\thetaim{n-1})\right)^{-1} \epsilon_n,
\end{align}
where $\|\epsilon_n\| = \bigO{\gamma_n^2}$ by Theorem \ref{theorem:nonasymptotic}.
By Lipschitz continuity of $h(\cdot)$ (\assumeMain\assumeConvex)
and the almost sure convergence of $\thetaim{n}$ to $\thetastar$, 
it follows $h(\thetaim{n-1}) = J_h(\thetastar) (\thetaim{n-1}-\thetastar) + \littleO{1}$,
where $\littleO{1}$ is a vector with vanishing norm.
Therefore we can rewrite \eqref{thm5:3} as follows,
\begin{align}
\label{thm5:4}
h(\thetaBar{n}) = A_n (\thetaim{n-1}-\thetastar) + \bigO{\gamma_n^2},
\end{align}
such that $\|A_n - J_h(\thetastar)\| \to 0$, and $\bigO{\gamma_n^2}$ denotes 
a vector with norm $\bigO{\gamma_n^2}$. Thus, we can rewrite \eqref{thm5:1} as
\begin{align}
\label{thm5:5}
\thetaim{n}-\thetastar = (I - \gamma_n A_n) (\thetaim{n-1} - \thetastar) 
-\gamma_n \varepsilon_n + \bigO{\gamma_n^2}.
\end{align}
The conditions for Fabian's theorem \citep[Theorem 1]{fabian1968asymptotic}
are now satisfied, and so $\thetaim{n}-\thetastar$ is asymptotically normal with mean zero, and variance that is given in the statement of Theorem 1 by 
\citet{fabian1968asymptotic}.
\end{proof}
}
\newcommand{\LemmaChi}{
\begin{lemma}
\label{lemma:chi}
Suppose that Assumptions~\ref{assumption:lip} and \aStrongConvex{} hold and consider 
	$(x, y)\in\mathbb{R}_p^2$, two $p$-component vectors. Then, for all $n=1, 2, \ldots$:
	\begin{enumerate}[(a)]
	\item $\chi_n$ is a contraction: $ \|\chi_n(x) - \chi_n(y) \| \leq
			\frac{1}{1+\gamma_n \mu}\|x-y\|$.

		\item $ \|\chi_n(x) - x \| \leq \frac{\gamma_n L}{1+\gamma_n\mu}
			\|x-\thetastar\|$.
	\end{enumerate}
\end{lemma}
}
\newcommand{\LemmaChiProof}{
\LemmaChi
\begin{proof}
	First note that since $h(\thetastar) = 0$, $\thetastar$ is a fixed point
	of $\chi_n$.
	\begin{enumerate}[(a)]
		\item By definition of $\chi_n$ in Equation~\eqref{eq:chin}, one can write:
			\begin{displaymath}
				\chi_n(x) - \chi_n(y) = x - y +\gamma_n
				\left[h\big(\chi_n(y)\big)-h\big(\chi_n(x)\big)\right].
			\end{displaymath}
			Taking the inner product with $(\chi_n(x)-\chi_n(y))$:
			\begin{equation}
				\label{eq:foo}
			\begin{split}
				\|\chi_n(x) - \chi_n(y)\|^2
				&= (x - y)^{\top}\big(\chi_n(x)-\chi_n(y)\big)\\
				&\quad -\gamma_n
				\left[h\big(\chi_n(x)\big)-h\big(\chi_n(y)\big)\right]^{\top}
				\big(\chi_n(x)-\chi_n(y)\big).
			\end{split}
			\end{equation}
			Using \aStrongConvex, we obtain:
			\begin{displaymath}
				(1+\gamma_n\mu)\|\chi_n(x) - \chi_n(y)\|^2
				\leq (x - y)^{\top}\big(\chi_n(x)-\chi_n(y)\big),
			\end{displaymath}
			and we conclude by applying the Cauchy-Schwarz inequality to the
			right-hand side.

		\item We can write $\|\chi_n(x) - x\| = \gamma_n
			\|h\big(\chi_n(x)\big)\|$ by definition of $\chi_n$. Because
			$h\big(\chi_n(\thetastar)\big)= 0$:
			\begin{align*}
				\|\chi_n(x) - x\|
				&= \gamma_n \|h\big(\chi_n(x)\big) - h\big(\chi_n(\thetastar)\big)\|\\
				&\leq \gamma_n L\|\chi_n(x) - \chi_n(\thetastar)\|
				\leq \frac{\gamma_n L}{1+\gamma_n\mu}\|x-\thetastar\|,
			\end{align*}
			where the first inequality uses Assumption~\ref{assumption:lip} and
			the second follows from \emph{(a)}.
	\end{enumerate}
\end{proof}
}
\newcommand{\LemmaXi}{\begin{lemma} \label{lemma:xi}
		Suppose that Assumptions~\ref{assumption:lip}, \ref{assumption:errors} and \aConvex{} hold. Consider the choice of parameter $a_k = a_n,\; 1\leq k\leq K$ in
	\eqref{eq:sfp} with $a_n\leq\frac{1}{(1+\gamma_n L)^2}$, then:
\begin{displaymath}
	\ExCond{\|\thetaim{n} - \thetaMed{n}\|^2}{\Fn{n-1}}
	\leq (1-a_n)^{K}\|\thetaim{n-1} - \thetaMed{n}\|^2 + \sigma^2\gamma_n^2 a_n.
\end{displaymath}
\end{lemma} }
\newcommand{\LemmaXiProof}{\LemmaXi\begin{proof}
	Let us write $H(w_k^n, \xi_{k+1}^n) = h(w_k^n)+\varepsilon_{k+1}^n$ and define $g(x) = \gamma_n h(x) + x - \theta_{n-1}$. We can write:
	\begin{align*}
		\|w_{k+1}^n-\thetaMed{n}\|^2
	&=\|w_k^n - a_n \big(g(w_k^n) + \gamma_n\varepsilon_{k+1}^n\big) - \thetaMed{n}\|^2\\
	&= \|w_k^n-\thetaMed{n}\|^2 -2 a_n\big( g(w_k^n)
	+ \gamma_n\varepsilon_{k+1}^n\big)^T\big(w_k^n-\thetaMed{n}\big)\\
	& \quad + a_n^2\big(\|g(w_k^n)\|^2+\gamma_n^2\|\varepsilon_{k+1}^n\|^2
	+ 2 g(w_k^n)^T\gamma_n\varepsilon_{k+1}^n\big).
\end{align*}
Taking expectations on both sides conditioned on $\mathcal{F}_{n,k}$ and noting
that $\Ex(\varepsilon_{k+1}|\mathcal{F}_{n,k})=0$ and
$\Ex(\|\varepsilon_{k+1}\|^2|\mathcal{F}_{n,k})\leq\sigma^2$ by \assumeErrors{} we
get:
\begin{align*}
	\Ex(\|w_{k+1}^n-\thetaMed{n}\|^2|\mathcal{F}_{n,k})&\leq
	\|w_k^n-\thetaMed{n}\|^2 -2 a_n g(w_k^n)^T\big(w_k^n-\thetaMed{n}\big)
	+ a_n^2\|g(w_k^n)\|^2 + a_n^2\gamma_n^2\sigma^2,
\end{align*}
It follows easily from Assumptions~\ref{assumption:lip} and \aConvex{} that $g$
is $(\gamma_n L + 1)$-Lipschitz continuous and that $\big(g(x)-g(y)\big)^\top (x-y)\geq \|x-y\|^2$ for al $x$ and $y$ in $\mathbb{R}^p$. Furthermore, since $g(\thetaMed{n})=0$ by definition:
% \begin{align*}
% 	\delta_{k+1}&\leq \delta_k -2 a_k \delta_k + a_k^2(\gamma_n
% 	L + 1)^2\delta_k + a_k^2\gamma_n^2\sigma^2
% \end{align*}
% which leads to our main recursion:
\begin{displaymath}
	\delta_{k+1}^n
	\leq \left[ 1 - 2a_n + a_n^2(1+\gamma_nL)^2\right]\delta_k
	+ a_n^2\gamma_n^2\sigma^2\;.
\end{displaymath}
where we took expectations on both sides conditioned on $\mathcal{F}_{n-1}$ and
write $\delta_k= \ExCond{\|w_k^n-\thetaMed{n}\|^2}{\mathcal{F}_{n-1}}$.
	For $a_n\leq\frac{1}{(1+\gamma_n L)^2}$, the above recursion
	becomes:
\begin{displaymath}
	\delta_{k+1}^n
	\leq (1 - a_n)\delta_k
	+ a_n^2\gamma_n^2\sigma^2\;.
\end{displaymath}
	Note that $w_K^n
	= \thetaim{n}$, and $w_1^n = \theta_{n-1}$ by definition. Therefore, we obtain:
\begin{displaymath}
	\ExCond{\|\thetaim{n} - \thetaMed{n}\|^2}{\Fn{n-1}}
	\leq (1-a_n)^{K}\|\thetaim{n-1} - \thetaMed{n}\|^2 + \sigma^2\gamma_n^2a_n\big(1-
	(1-a_n)^K\big).
\end{displaymath}
%We then obtain the statement of the lemma by applying the square root on both
%	sides and using Jensen's inequality on the left-hand side and subadditivity
%	of the square root on the right-hand side.
\end{proof}
}
\newcommand{\TheoremNested}{
\begin{theorem}
	\label{thm:nested}
	Suppose that Assumptions~\ref{assumption:lip}, \ref{assumption:errors} and \ref{assumption:convexity}(\ref{A:h_inward2}) hold, then
the proximal stochastic fixed point procedure in
Equation~\eqref{eq:sfp} with parameters $\gamma_n=\gamma$ and $a_k
= 2a / K$, such that $e^{-a}< \mu/L$ and $K\geq 2a (1+\gamma L)^2$,
	satisfies:
	\begin{displaymath}
	\Ex\|\thetaim{n} - \thetastar\|
		\leq C^n\|\theta_0-\thetastar\|
 + \frac{\gamma\sigma\sqrt{2a}}{(1-C)\sqrt{K}} 
			\end{displaymath}
			where $C\defeq (1+e^{-a}\gamma L)/(1+\gamma\mu)$.
\end{theorem}
}
\newcommand{\TheoremNestedProof}{
\TheoremNested

\begin{proof}
We decompose the distance between $\theta_n$ and $\theta_\star$ as the distance
between $\theta_n$ and $\thetaMed{n}$, and the distance of $\theta_n^+$ to
$\thetastar$:
\begin{align*}
	\Ex\|\thetaim{n}  - \thetastar\|
	& \leq \Ex{\|\thetaim{n} - \thetaMed{n}\|}
	+ \Ex{\|\thetaMed{n} - \thetastar\|}
	\commentEq{triangle inequality}\\
	&= \Ex{\|\thetaim{n} - \thetaMed{n}\|}
	+ \Ex{\|\chi_n(\thetaim{n-1}) - \chi_n(\thetastar)\|}
	\commentEq{by definition of $\chi_n$ in Equation~\eqref{eq:chin}}\\
	&\leq \Ex{\|\thetaim{n} - \thetaMed{n}\|}
	+ \frac{1}{1+\gamma\mu}\Ex{\|\thetaim{n-1} - \theta_{\star}\|}
	\commentEq{by Lemma~\ref{lemma:chi}~\emph{(a)}}\\
	&\leq (1-a_n)^{K/2}\Ex{\|\thetaim{n-1} - \chi_n(\thetaim{n-1})\|}
	+\sigma\gamma\sqrt{a_n}
	+ \frac{1}{1+\gamma\mu}\Ex{\|\thetaim{n-1} - \thetastar\|}
	\commentEq{by	Lemma~\ref{lemma:xi}}\\
	&\leq \frac{(1-a_n)^{K/2}\gamma L}{1+\gamma\mu}\Ex{\|\thetaim{n-1} - \thetastar\|}
	+\sigma\gamma\sqrt{a_n}
	+ \frac{1}{1+\gamma\mu}\Ex{\|\thetaim{n-1} - \thetastar\|}
	\commentEq{by Lemma~\ref{lemma:chi}\emph{(b)}}\\
	&= \left(\frac{1+(1-a_n)^{K/2}\gamma
	L}{1+\gamma\mu}\right)\Ex{\|\thetaim{n-1} - \theta_{n-1}'\|}
	+\sigma\gamma\sqrt{a_n}\,.
\end{align*}
%where the first line is the triangle inequality, the second is by definition,
%	the third is by Lemma~\ref{lemma:chi} \emph{(b)}, the fourth is by
%	Lemma~\ref{lemma:xi}, the fifth is by Lemma~\ref{lemma:chi} \emph{(c)}, the
%	sixth is by triangle inequality, and the last is by
%	Lemma~\ref{lemma:idealized}.

	We now choose $a_n$ constant of the form $\frac{2a}{K}$ and obtain the following
	recursion:
\begin{align*}
	\Ex{\|\thetaim{n} - \thetastar\|}
	&\leq C\cdot \Ex{\|\thetaim{n-1} - \thetastar\|}
	+\sigma\gamma\frac{\sqrt{2a}}{\sqrt{K}},
\end{align*}
where $C$ is as in the theorem statement. Observe that for our choice of parameter, $C<1$. This recursion solves to:
	\begin{displaymath}
	\Ex{\|\thetaim{n} - \thetastar\|}
		\leq \frac{\gamma\sigma\sqrt{2a}}{(1-C)\sqrt{K}} + C^n\|\theta_0-\thetastar\|
		\,.
	\end{displaymath}
	\qed
\end{proof}

For completeness, we finally present a variant of the previous procedure, also
providing an approximate implementation of the proximal Robbins--Monro procedure
via proximal stochastic fixed points. Compared to the procedure \eqref{eq:sfp}
analyzed in Theorem~\ref{thm:nested}, we now perform an extra gradient step to
compute $\thetaim{n}$ from $\thetaim{n-1}$ instead of simply using $w_K^n$.
Formally:
\begin{equation}
	\label{eq:sfp2}
	\begin{split}
		w_1^n &= \thetaim{n-1},\\
		w_{k}^n &= w_{k-1}^n - a_k \big(\gamma_n H(w_{k-1}^n, \xi_k^n) + w_{k-1}^n
		- w_1^n\big),\quad 1 < k\leq K,\\
			\thetaim{n} &= \thetaim{n-1} -\gamma_n H(w_K^n,\xi_{K+1}^n)
	\end{split}
	\end{equation}

\begin{theorem}
	Suppose that Assumptions~\ref{assumption:lip}, \ref{assumption:errors} and \ref{assumption:convexity}(\ref{A:h_inward2}) hold, then
the procedure in
Equation~\eqref{eq:sfp2} with parameters $\gamma_n=\gamma_1/n$ and $a_k
= 2a / K$, where $a$ and $K$ are constants satisfying:
	\begin{displaymath}
		e^{-a} \leq \frac{\mu}{2\gamma_1L^2},\quad
		K\geq 3a\cdot\max\big\{(1+\gamma_1L)^2, (\gamma_1 L)^2 + e^{3a}\big\}
		\,.
	\end{displaymath}

	Then:
	\begin{displaymath}
		\Ex{\|\thetaim{n}-\thetastar\|^2}
		\leq
		\frac{e^{4\gamma_1^2\mu^2}}{n^{\gamma_1\mu}}\|\thetaim{0}-\thetastar\|^2
		+2\gamma_1^2\sigma^2e^{2\gamma_1^2\mu^2}e^{\gamma_1\mu} \cdot S(n)
		\,,
	\end{displaymath}
	where:
	\begin{displaymath}
		S(n)\leq\begin{cases}
			\frac{1}{\gamma_1\mu-1}\frac{1}{n}&\text{if $\gamma_1\mu>1$}\\
			\log(en)/n&\text{if $\gamma_1\mu=1$}\\
			\frac{2}{1-\gamma_1\mu}\frac{1}{n^{\gamma_1\mu}}&\text{if
			$\gamma_1\mu<1$}\\
		\end{cases}
	\end{displaymath}
\end{theorem}

\begin{proof}
	We focus on a single iteration $n$ and write $H(w_K^n,\xi_{K+1}^n)
	= h(w_K^n) + \epsilon_n$. We first decompose the error as usual:
	\begin{align*}
		\|\thetaim{n}-\thetastar\|^2
		&= \|\thetaim{n-1}-\gamma_n h(w_K^n)-\gamma_n\eps_n-\thetastar\|^2\\
		&= \|\thetaim{n-1}-\gamma_n h(w_K^n)-\thetastar\|^2+\gamma_n^2\|\eps_n\|^2
		-2\gamma_n \eps_n^T\big(\thetaim{n-1} - \gamma_n h(w_K^n) - \thetastar\big)
		\,.
	\end{align*}

	Recall that $\ExCond{\eps_n}{\Fn{n,K}} = 0$ and
	$\ExCond{\|\eps_n\|^2}{\Fn{n,K}}\leq\sigma^2$ by
	Assumption~\ref{assumption:errors}. Hence:
	\begin{align*}
		\ExCond{\|\thetaim{n}-\thetastar\|^2}{\Fn{n,K}}
		&\leq \|\thetaim{n-1}-\gamma_n h(w_K^n)-\thetastar\|^2+\gamma_n^2\sigma^2\\
		&= \|\thetaMed{n}+\gamma_n \big(h(\thetaMed{n})-h(w_K^n)\big)-\thetastar\|^2
		+\gamma_n^2\sigma^2
	\end{align*}
	where the equality uses that $\thetaim{n-1}-\gamma_n h(\thetaMed{n})
	= \thetaMed{n}$ by Eq.~\eqref{eq:spp0}.

	Next, using that $\|a+b\|^2 \leq (1+\alpha)\|a\|^2 + (1+\alpha^{-1})\|b\|^2$
	for all $\alpha>0$ by Young's inequality:
	\begin{align*}
		\ExCond{\|\thetaim{n}-\thetastar\|^2}{\Fn{n,K}}
		&\leq (1+\alpha)\|\thetaMed{n}-\thetastar\|^2
		+\gamma_n^2(1+\alpha^{-1})\|h(\thetaMed{n})-h(w_K^n)\|^2
		+\gamma_n^2\sigma^2\\
		&\leq \frac{1+\alpha}{(1+\gamma_n\mu)^2}\|\thetaim{n-1}-\thetastar\|^2
		+(1+\alpha^{-1})(\gamma_nL)^2\|\thetaMed{n}-w_K^n\|^2
		+\gamma_n^2\sigma^2
	\end{align*}
	where the second inequality uses Lemma~\ref{lemma:chi}~\emph{(a)} and
	Assumption~\ref{assumption:lip}.

	Taking expectations conditioned on $\Fn{n-1}$ and using
	Lemma~\ref{lemma:xi} (our choice of parameters satisfies in particular
	$a_n\leq 1/(1+\gamma_nL)^2$ as required by the Lemma):
	\begin{align*}
		\ExCond{\|\thetaim{n}-\thetastar\|^2}{\Fn{n-1}}
		&\leq \frac{1+\alpha}{(1+\gamma_n\mu)^2}\|\thetaim{n-1}-\thetastar\|^2
		+(1+\alpha^{-1})(\gamma_nL)^2(1-a_n)^K\|\thetaMed{n}-\thetaim{n-1}\|^2\\
		&\quad\quad+(1+\alpha^{-1})(\gamma_n L)^2\gamma_n^2\sigma^2a_n
		+\gamma_n^2\sigma^2\\
		&\leq
		\frac{1+\alpha+(1+\alpha^{-1})(\gamma_nL)^4(1-a_n)^K}{(1+\gamma_n\mu)^2}\|\thetaim{n-1}-\thetastar\|^2\\
		&\quad\quad+\gamma_n^2\sigma^2\big[1+(1+\alpha^{-1})(\gamma_n L)^2a_n\big]\,.
	\end{align*}
	where the second inequality uses Lemma~\ref{lemma:chi}~\emph{(b)}.

	We now pick $\alpha = (\gamma_n L)^2(1-a_n)^{K/2}$ and take expectations on
	both sides:
	\begin{displaymath}
		\Ex{\|\thetaim{n}-\thetastar\|^2}
		\leq \left(\frac{1+ (\gamma_n
		L)^2(1-a_n)^{K/2}}{1+\gamma_n\mu}\right)^2\Ex{\|\thetaim{n-1}-\thetastar\|^2}
		+\gamma_n^2\sigma^2\left[1+(\gamma_n L)^2a_n
			+ \frac{a_n}{(1-a_n)^{K/2}}\right]\,.
	\end{displaymath}
	Using the inequality $\exp\big(-nx/(1-x)\big)\leq(1-x)^n\leq\exp(-nx)$, it
	is easy to see that the choice of parameters in the theorem statement
	implies:
	\begin{displaymath}
		(1-a_n)^{K/2}\leq e^{-a},\quad
		e^{-a}(\gamma_nL)^2\leq\gamma_n\mu/2,\quad
		(\gamma_n L)^2 a_n + \frac{a_n}{(1-a_n)^{K/2}}\leq 1\;,
	\end{displaymath}
	hence the previous inequality yields:
	\begin{align*}
		\Ex{\|\thetaim{n}-\thetastar\|^2}
		&\leq \left(\frac{1+ \gamma_n\mu/2}{1+\gamma_n\mu}\right)^2\Ex{\|\thetaim{n-1}-\thetastar\|^2}
		+2\gamma_n^2\sigma^2\\
		&\leq\left(1-\frac{\gamma_n\mu}{(1+\gamma_n\mu)^2}\right)\Ex{\|\thetaim{n-1}-\thetastar\|^2}
		+ 2\gamma_n^2\sigma^2\,.
	\end{align*}

	Writing $y_n\defeq \Ex{\|\thetaim{n}-\thetastar\|^2}$, $a_n\defeq
	\gamma_n\mu/(1+\gamma_n\mu)^2$ and $b_n\defeq 2\gamma_n^2\sigma^2$,
	the previous inequality reads $y_n\leq (1-a_n)y_{n-1}+b_n$.
	Define $p_n\defeq\prod_{k=1}^n(1-a_k)$, an easy induction gives:
	\begin{equation}
		\label{eq:main_recursion}
		y_n \leq p_n y_0 + p_n\sum_{k=1}^n\frac{b_k}{p_k}
		\,.
	\end{equation}

	We first focus on getting a lower bound and upper bound on $p_n$. For the
	lower bound, using that $(1-x)\geq \exp\big(-x/(1-x)\big)$, we obtain:
	\begin{align*}
		p_n
		&\geq \exp\left(-\sum_{k=1}^n a_k\right)
		\exp\left(-\sum_{k=1}^n\frac{a_k^2}{1-a_k}\right)\\
		&\geq \exp\left(-\sum_{k=1}^n \gamma_k\mu\right)
		\exp\left(-\sum_{k=1}^n\gamma_k^2\mu^2\right)
		\geq\frac{e^{2\gamma_1^2\mu^2-\gamma_1\mu}}{n^{\gamma_1\mu}}\,.
	\end{align*}
	where the second inequality uses the definition of $a_k$ and the last
	inequality uses that $\gamma_n=\gamma_1/n$ an the series approximations of
	Lemma~\ref{lemma:series}.
	Similarly for the upper bound, using that $(1-x)\leq \exp(-x)$:
	\begin{align*}
	p_n\leq \exp\left(-\sum_{k=1}^na_k\right)
	&=\exp\left(-\sum_{k=1}^n\gamma_k\mu\right)
	\exp\left(\sum_{k=1}^n\frac{\gamma_k^2\mu^2(2+\gamma_k\mu)}{(1+\gamma_k\mu)^2}\right)\\
	&\leq\exp\left(-\sum_{k=1}^n\gamma_k\mu\right)
	\exp\left(\sum_{k=1}^n2\gamma_k^2\mu^2\right)
	\leq \frac{e^{4\gamma_1^2\mu^2}}{(n+1)^{\gamma_1\mu}}\,.
	\end{align*}

	Plugging the previous two bounds into \eqref{eq:main_recursion}, we
	obtain:
	\begin{displaymath}
		y_n\leq\frac{e^{4\gamma_1^2\mu^2}}{n^{\gamma_1\mu}}y_0
	+\frac{2\gamma_1^2\sigma^2e^{2\gamma_1^2\mu^2}e^{\gamma_1\mu}}{(n+1)^{\gamma_1\mu}}
	\sum_{k=1}^n\frac{1}{k^{2-\gamma_1\mu}}\,.
	\end{displaymath}
	Finally, we conclude by defining $S(n)
	= (n+1)^{-\gamma_1\mu}\sum_{k=1}^n k^{\gamma_1\mu-2}$ and using
	Lemma~\ref{lemma:series} to obtain the upper bounds on $S(n)$ given in the
	theorem statement depending on the value of $\gamma_1\mu$.

	\iffalse
	We focus on the second summand on the right hand side. Using that $1/p_k
	- 1/p_{k-1} = a_k/p_k$ by definition of $p_k$, and summation by parts:
	\begin{displaymath}
		p_n\sum_{k=1}^n \frac{b_k}{p_k}
		= p_n\sum_{k=1}^n\frac{b_k}{a_k}\left(\frac{1}{p_k}-\frac{1}{p_{k-1}}\right)
		= \frac{b_{n+1}}{a_{n+1}} - \frac{b_1}{a_1}p_n
		+ p_n\sum_{k=1}^n\left(\frac{b_k}{a_k}-\frac{b_{k+1}}{a_{k+1}}\right)\frac{1}{p_k}
		\,.
	\end{displaymath}

	Then, using the definition of $b_k$ and $a_k$:
	\begin{align*}
		\frac{b_k}{a_k}-\frac{b_{k+1}}{a_{k+1}}
		&= \frac{2\sigma^2}{\delta}\big[\gamma_k(1+\gamma_k\delta)^2
		- \gamma_{k+1}(1+\gamma_{k+1}\delta)^2\big]\\
		&= \frac{2\sigma^2}{\delta}(\gamma_k-\gamma_{k+1})
		+4\sigma^2(\gamma_{k}^2-\gamma_{k+1}^2)
		+2\sigma^2\delta(\gamma_k^3-\gamma_{k+1}^3)\\
		&\leq \frac{2\sigma^2}{\delta}(\gamma_k-\gamma_{k+1})
		+8\sigma^2\gamma_k(\gamma_{k}-\gamma_{k+1})
		+6\sigma^2\delta\gamma_k^2(\gamma_k-\gamma_{k+1})
		\,,
	\end{align*}
	where the inequality uses that $\gamma_k$ is non-increasing. For
	$\gamma_k = \gamma_1/k$, we have $\gamma_k-\gamma_{k+1}\leq \gamma_1/k^2$
	and:
	\begin{align*}
		\frac{b_k}{a_k}-\frac{b_{k+1}}{a_{k+1}}
		&\leq \frac{2\sigma^2}{\delta}\frac{\gamma_1}{k^2}
		+8\sigma^2\frac{\gamma_1^2}{k^3} +6\sigma^2\delta\frac{\gamma_1^3}{k^4}
		=\frac{b_k}{\gamma_1\delta} + 
		+8\sigma^2\frac{\gamma_1^2}{k^3} +6\sigma^2\delta\frac{\gamma_1^3}{k^4}
	\end{align*}
	\fi
\end{proof}

\begin{lemma}
	\label{lemma:series}
	For any $\alpha>0$ and $n\geq 1$:
	\begin{displaymath}
		\frac{(1+n)^{1-\alpha} - 1}{1-\alpha}
		\leq\sum_{k=1}^n\frac{1}{k^\alpha}
		\leq \frac{n^{1-\alpha} - \alpha}{1-\alpha}
		\quad\text{and}\quad
		\frac{n^{1+\alpha}}{1+\alpha}\leq\sum_{k=1}^n k^{\alpha}\leq
	\frac{(n+1)^{1+\alpha}-1}{1+\alpha}
	\,,
	\end{displaymath}
	where the first bound remains true by continuity at $\alpha=1$:
	$\log(1+n)\leq\sum_{k=1}^n\frac{1}{k}\leq 1+\log n$.
\end{lemma}

\begin{proof}
	Immediate by approximating the discrete sums from above and
	below by integrals.
\end{proof}
}
\definecolor{shadecolor}{rgb}{1,.8,.3}
\newcommand{\EqLabel}[1]{\quad\quad\quad\text{(#1)}}
\newcounter{remark}[section]
\title{The Proximal Robbins--Monro Method}
\author{
Panos Toulis\thanks{University of Chicago, Booth School of Business; 
{\em email:}~\url{panos.toulis@chicagobooth.edu}.
}
\and 
Thibaut Horel\thanks{MIT, Laboratory for Information \& Decision Systems;
{\em email:}~\url{thibauth@mit.edu}.
}
\and Edoardo M. Airoldi\thanks{Temple University, Fox School of Business;
{\em email:}~\url{airoldi@temple.edu}.
}
}
\renewcommand{\P}{F}
\begin{document}

\maketitle

\begin{abstract}
The need for parameter estimation with massive datasets has reinvigorated interest in stochastic optimization and iterative estimation procedures. Stochastic approximations are at the forefront of this recent development as they yield procedures that are simple, general, and fast.
However, standard stochastic approximations are often numerically unstable.
% , which is only partially addressed in current practice with ad hoc techniques.
% 
Deterministic optimization, on the other hand, increasingly uses proximal updates to achieve numerical stability in a principled manner. A theoretical gap has thus emerged.
While standard stochastic approximations are subsumed by the framework of~\citet{robbins1951}, there is no such framework for stochastic approximations with proximal updates. 
In this paper, we conceptualize a proximal version of the classical Robbins--Monro procedure.
Our theoretical analysis demonstrates that the proposed procedure has important stability benefits over the classical Robbins--Monro procedure, while it retains the best known convergence rates. Exact implementations of the proximal Robbins--Monro procedure are challenging, but we show that approximate implementations lead to procedures that are easy to implement, and still dominate classical procedures by achieving numerical stability, practically without tradeoffs. 
Moreover, approximate proximal Robbins--Monro procedures can be applied even when the objective cannot be calculated analytically, and so they generalize stochastic proximal procedures currently in use. 
%
% These advantages are akin to those seen in deterministic proximal optimization, where the framework is typically impossible to instantiate exactly, but where approximate instantiations lead to new optimization procedures that dominate classical ones.
\end{abstract}

\noindent\textbf{Keywords:} iterative estimation; stochastic approximation; stochastic gradient descent; stochastic fixed-point equations; proximal operators; implicit updates.

\newpage
\section{Introduction}
\label{section:introduction}
In a seminal paper,~\citet{robbins1951} considered the problem of estimating the zero $\thetastar$ of a function $h:\Reals{p}\to\Reals{}$, 
where  $h(\theta)$ is unknown but can be 
unbiasedly estimated by a function $H$ of some random variable $\xi$, such that 
$\Ex_\xi(H(\theta, \xi))=h(\theta)$, for fixed $\theta\in\Theta\subseteq\Reals{p}$.
Starting from $\theta_0$, \cite{robbins1951} 
iteratively estimated $\thetastar$ using observations $\xi_1, \xi_2, \ldots$, as follows:
\begin{align}
\label{eq:rm}
\thetarm{n} = \thetarm{n-1} - \gamma_{n} H(\theta_{n-1}, \xi_n),
\end{align}
where 
% $\gamma_n$ is known as the learning rate sequence, typically defined as 
$\gamma_n\propto1/n$, for $n=1, 2, \ldots$, so that $\sum \gamma_i^2 < \infty$ 
and $\sum\gamma_i = \infty$. 
% The former condition guarantees convergence, and the latter guarantees that convergence can be towards any point in $\Reals{p}$.
%
\citet{robbins1951} proved convergence in quadratic mean for the procedure in~Equation~\eqref{eq:rm}, under a monotonicity assumption for $h$
and bounded second moments for the noise,~$H(\theta, \xi)- h(\theta)$.
\citet{blum1954,ljung1992stochastic, kushner2003stochastic, borkar2008stochastic} later strengthened this convergence result.
Due to its remarkable simplicity and empirical performance, the Robbins--Monro procedure has found widespread applications across scientific fields, 
including statistics~\citep{nevel1973stochastic, ruppert1988efficient}, engineering~\citep{benveniste1990adaptive}, and optimization~\citep{nesterov2004introductory}.

Recently, the Robbins--Monro procedure has attracted considerable interest in
machine learning with large data sets~\citep{zhang2004solving, bottou2010,
	moulines2011non, bottou2016optimization}, and in scalable statistical inference~\citep{toulis2015scalable, chen2016statistical, su2018statistical,
	li2017statistical, toulis2017aos}. 
In this context, given a dataset $D$, the Robbins--Monro
	procedure in Equation~\eqref{eq:rm} can be applied with $h(\theta)$ being
	the gradient of the negative log-likelihood of $\theta$ given $D$ and $H(\theta, \xi)$
	being the gradient of the negative log-likelihood of $\theta$ calculated at
	a single data point sampled with replacement from $D$. Standard theory then
	implies that $\thetarm{n}$ converges to a point $\theta_\infty$ for which
	$h(\theta_\infty)=0$. In other words, $\thetarm{n}$ converges to the
	maximum-likelihood estimator (or maximum a posteriori if regularization is
	used) given dataset $D$.  In this context, $h$ is the gradient of a convex
	scalar potential, and the Robbins--Monro procedure is commonly referred to
	as stochastic gradient descent~(SGD). 
% While this context is widely applicable, and also helps with concreteness and interpretation, in this paper we consider the more general case where $h$ is not necessarily the gradient of a convex potential~(with the exception of heorem~\ref{theorem:deviance}).
% Thus, our present work is placed in the broader context of stochastic approximation.

% In this work, we will study the theoretical properties of a modified
% stochastic approximation method in the more general setting with a stream of
% data points, but will also discuss applications to iterative estimation with
% a finite data set.

%% Issue 
A well-known issue with the Robbins--Monro procedure, however,
is numerical stability and sensitivity to specification of hyperparameters, especially
the learning rate $\gamma_1$.
% that the learning rate sequence crucially affects  both its numerical stability and convergence.
% non-asymptotic convergence
% Regarding stability, consider a simple example where $\thetarm{n}$  in Eq.\eqref{eq:rm} is approximating a scale parameter,  and thus needs to be positive. Clearly, even if the previous iterate $\thetarm{n-1}$ is positive,  there is no guarantee that the update in Eq.\eqref{eq:rm} will provide a positive next iterate $\thetarm{n}$.
%
For instance, the procedure can 
be arbitrarily slow  if $\gamma_1$ is even slightly misspecified.
To illustrate, suppose that $\gamma_n = \gamma_1/n$, and 
there exists a scalar potential $\P$, such that $\nabla \P(\theta) = h(\theta)$, for all $\theta\in\Theta$. 
If $\P$ is strongly convex with parameter $\mu$,
then  $\Ex{\|\thetarm{n}-\thetastar\|^2}= \bigO{n^{-\epsilon}}$ if $\epsilon=2\mu\gamma_1< 1$ \cite[Section 1]{nemirovski2009robust}; \cite[Section 3.1]{moulines2011non}.
On the other hand, the procedure can diverge even in the first few iterations if 
the learning rate is too large, 
especially with non-Lipschitz likelihoods as in Poisson regression~\citep{toulis2014statistical}.
In summary, small learning rates can make the Robbins--Monro iterates converge very slowly, whereas large learning rates can make the iterates diverge numerically. 
Importantly, the requirements for numerical stability and fast convergence are very hard to reconcile in practice, especially in large-scale problems, which renders 
the Robbins--Monro method, and all its derived procedures, 
inapplicable without extensive heuristic modifications~\citep{bottou2012stochastic}.

%%%%
%%%%
%%%    The new SA algorithm
\section{The proximal Robbins--Monro procedure: An overview}
\label{sec:implicit}
%

% prox updates
In this paper, our idea to improve the stability of the Robbins--Monro procedure is 
to leverage the proximal point algorithm of~\citet{rockafellar1976monotone}. Assuming a potential function $\P$ and a current iterate $\theta_{n-1}$, the proximal point update is defined as follows:
\begin{align}
\label{eq:proximal}
\thetaMed{n} = \mathrm{prox}_{\gamma_n F}(\theta_{n-1}) :=
\arg\min_{\theta\in\Theta}\left\{\frac{1}{2\gamma_n}\|\theta-\thetaim{n-1}\|^2 + \P(\theta)\right\} 
= \theta_{n-1} - \gamma_n h(\theta_n^+).
\end{align}
In recent years, interest in optimization through
proximal operators~(i.e., function $\mathrm{prox}_{\gamma_n F}$ above) has exploded because the resulting proximal procedures are stable and 
converge with minimal assumptions~\citep{bauschke2011convex,
parikh2013proximal}.  In addition, they can be applied to settings where the
objective function is the sum of a smooth and a non-smooth function
(as is common when using regularization), and often lead to
efficient, parallelizable algorithms.

% stability
To illustrate the stability of proximal updates let us take norms in Equation~\eqref{eq:proximal}:
$$
\|\theta_n^+-\thetastar\|^2 + 2\gamma_n (\theta_n^+-\theta_\star)^\top h(\theta_n^+) + 
\gamma_n^2 \|h(\theta_n^+)\|^2 = \|\theta_{n-1}-\thetastar\|^2.
$$
By convexity of $\P$, we have $h(\theta)^\top(\theta-\thetastar)\ge0$ for
any $\theta$, and so unless $h(\thetaMed{n})=0$ we obtain $\|\thetaMed{n}-\thetastar\|^2
< \|\thetaim{n-1}-\thetastar\|^2$, a contraction.
More generally, the proximal update can be
shown to be a firmly non-expansive operator for any choice of $\gamma_n >0$, and
so the procedure in Equation~\eqref{eq:proximal} is indeed numerically stable.
% in practice..
In practice, however, the proximal point algorithm is infeasible as solving the minimization problem \eqref{eq:proximal} is, in general, as hard as minimizing $F$ directly. Regardless, classical results show that approximate solutions to Equation~\eqref{eq:proximal} are still stable 
if the approximation errors are small enough~\citep{rockafellar1976monotone}.

In this paper, we first introduce stochastic errors in the proximal point updates, and study the properties of the resulting procedure from a probabilistic viewpoint.
This procedure will follow a stylized model at first, but will later serve as a template for concrete, approximate implementations.
To that end, we begin with the following stylized model: an agent has an initial estimate $\theta_0$, 
then an oracle calculates the proximal update $\theta_1^+$ according to Equation~\eqref{eq:proximal}, 
which the agent then observes with error $\varepsilon_1$, as $\theta_1 = \theta_1^+ - \gamma_1 \varepsilon_1$; 
then, the oracle computes the proximal update $\theta_2^+$ given $\theta_1$, and so on.
This procedure is depicted in Table~\ref{tab:spp}, and is also summarized below:
\begin{table}
\caption{Stylized model of the Stochastic Proximal Point Algorithm.
The update from $\theta_{n}$ to $\theta_n^+$ is deterministic, and  
from $\theta_n^+$ to $\theta_{n+1}$ it is stochastic.\label{tab:spp}
}
\begin{tabular}{llllllllllll}
 &  &  &   &  &  &  &  &  &   &  &  \\
agent  & $\theta_0$ &  &   &  & $\theta_1$ &  & $\theta_{n-1}$ &  &   &  & $\theta_n$ \\
       &   & $\searrow$ &   & $\nearrow$~(error) &   & $\ldots$ &   & $\searrow$ &   & $\nearrow$~(error) &   \\
oracle &   &  & $\theta_1^+$ &  &   &  &   &  & $\theta_n^+$ &  &  
\end{tabular}
\end{table}
\begin{align}
\theta_n^+ & = \theta_{n-1} - \gamma_n h(\theta_n^+), \label{eq:spp0}\\
\theta_n & = \theta_n^+ - \gamma_n \varepsilon_n.\EqLabel{Stochastic Proximal Point Algorithm}
\label{eq:spp}
\end{align}
An immediate concern with the {\em stochastic proximal point algorithm} of Equation~\eqref{eq:spp} is whether it inherits
the stability properties of the classical proximal point algorithm.
Indeed, in Section~\ref{section:theory} we show that when $\gamma_n$ decreases at a proper rate, and $\varepsilon_n$ is random error with uniformly bounded variance, then the stochastic proximal point algorithm converges, and is numerically stable. 
In particular, we prove results on almost sure convergence~(Theorem~\ref{theorem:convergence}), and derive error bounds for convex~(Theorem~\ref{theorem:deviance})
and strongly convex objectives~(Theorem~\ref{theorem:nonasymptotic}). We then discuss the stability of the algorithm by analyzing the dependence of 
expected errors, $\Ex(\|\theta_n-\theta_\star\|^2)$, on the initial error, $\Ex(\|\theta_0-\theta_\star\|^2)$, and the learning rate, $\gamma_1$; see Section~\ref{sec:non_asymptotics} for details.

Following our theoretical analysis of the stylized model of stochastic proximal points, we then focus on concrete instantiations.
Our key assumption is that $h(\theta)$ can be estimated by some random variable $H(\theta, \xi)$, such that $\Ex_\xi(H(\theta, \xi)) = h(\theta)$ and $\varepsilon=
 H(\theta, \xi) - h(\theta)$.
These definitions together with~\eqref{eq:spp0} and~\eqref{eq:spp} imply the following procedure:
\begin{align}
\label{eq:proxRM}
\theta_n = \theta_{n-1} - \gamma_n H(\theta_n^+, \xi_n).\EqLabel{Proximal Robbins--Monro Procedure}
\end{align}
This is a form of a {\em proximal Robbins--Monro procedure}, since its update in Equation~\eqref{eq:proxRM} only differs from the classical Robbins--Monro update of Equation~\eqref{eq:rm} in using the proximal update $\theta_n^+$ instead of $\theta_{n-1}$ in $H(\theta, \xi)$. As a 
special case of the stochastic proximal point algorithm defined above, the proximal Robbins--Monro is also numerically stable. However, its exact  implementation  remains problematic due to the presence of the proximal term $\theta_n^+$ in Equation~\eqref{eq:proxRM}.%, which is hard to compute.

We thus propose and study two different ways to implement the proximal Robbins--Monro procedure, 
depending on whether we can observe $\xi$ directly, or not.
First, we consider settings where $\xi$ can be observed directly.
For example, in the context of stochastic gradient descent~(see Section~\ref{section:plugin}), $\xi$ is a random datapoint from the dataset, and $H(\theta, \xi)$ is the corresponding stochastic gradient calculated at parameter value $\theta$. 
In such settings, we can perform an interesting --- and perhaps counterintuitive --- application of the ``plug-in principle''. 

In particular, taking expectations in Equation~\eqref{eq:proxRM} yields
$
\Ex(\theta_n | \Fn{n-1}) = \theta_n^+,
$
where $\Fn{n-1}$ is the natural filtration, $\sigma(\xi_1, \ldots, \xi_{n-1})$. Since $\theta_n$ is an unbiased 
estimator of $\theta_n^+$ we can simply plug in $\theta_n$ on the right-hand side of~\eqref{eq:proxRM} to obtain:
\begin{align}\label{eq:intro_isgd}
\theta_n = \theta_{n-1} - \gamma_n H(\theta_n, \xi_n).\EqLabel{Incremental Proximal/Implicit Methods}
\end{align}
Equation~\eqref{eq:intro_isgd} describes a wide family of stochastic optimization methods known as incremental proximal methods, 
or implicit stochastic gradient descent, depending on the field of application. We discuss related work in Section~\ref{section:related}, 
and provide details and practical examples in Section~\ref{section:impl_plugin}.

Second, we consider the more novel and challenging setting, 
where we cannot sample or observe $\xi$ directly.
This is the case, for example, when we want to be agnostic about the 
analytical form of $h(\theta)$, or when $H(\theta, \xi)$ can only be sampled successively in the context of a sequential experiment~(see Section~\ref{section:example}).
In such cases, stochastic gradients or implicit updates as in Equation~\eqref{eq:intro_isgd} are not applicable.
To address this challenge we first take a full expectation in Equation~\eqref{eq:proxRM} 
to obtain $\Ex(\theta_n^+ - \theta_{n-1} + \gamma_n H(\theta_n^+, \xi_n) \big | \Fn{n-1}) = 0$.
This implies that conditional on $\Fn{n-1}$ we can view the proximal iterate $\theta_n^+$ as a solution to the following characteristic equation:
$$
\Ex\big(\theta - \theta_{n-1} + \gamma_n H(\theta, \xi_n)\mid \Fn{n-1}\big) = 0.
$$
The key idea is then to apply the classical Robbins--Monro procedure directly on this characteristic equation, 
leading to the following stochastic fixed point procedure:
\begin{align}
w_1 & = \theta_{n-1},\nonumber\\
w_k & = w_{k-1} - a_k \big (\gamma_n H(w_{k-1}, \xi_k) + w_{k-1} - w_1\big),\quad k=1, \ldots, K.\nonumber\\
\theta_n  & = w_K.\EqLabel{Proximal Stochastic Fixed-Point}\label{eq:intro_sfp}
\end{align}

At first, it may seem that the {\em proximal stochastic fixed-point procedure} described  in Equation~\eqref{eq:intro_sfp} may have the same stability issues as classical stochastic approximation, since it is using classical updates in the inner loop. To investigate this, in Section~\ref{section:impl_nested} we analyze the convergence properties of this procedure, which is particularly challenging due to its nested structure. 
Our analysis reveals conditions under which the procedure can be more stable 
than classical Robbins--Monro.  In Section~\ref{section:example}, we also show significant benefits in numerical stability using the classical quantile regression example of~\citet{robbins1951}.
The proximal stochastic fixed-point procedure of Equation~\eqref{eq:intro_sfp} and its theoretical analysis, therefore constitute a key contribution of this paper.  We are unaware of other proximal procedures, exact or approximate, that address settings where the random components, $\xi$, cannot be observed directly, or where the underlying procedure is comprised of nested stochastic fixed points.  

The rest of this paper is structured as follows. In Section~\ref{section:related} we discuss related work. In Section~\ref{section:theory}, we study the stochastic proximal point algorithm, and show theoretically its appealing statistical and stability properties. 
We then focus on the proximal Robbins--Monro procedure as a natural instance of the stochastic proximal point algorithm.
As mentioned earlier, proximal Robbins--Monro is also an ``idealized procedure", i.e., 
it is well-defined mathematically but, in general, it cannot be directly computed.
% Such idealized procedures are ubiquitous in statistics and optimization.  For example, the proximal point algorithm described above is approximated in practice even though it has a concrete mathematical definition. 
%
We thus explore the extent to which its theoretical properties carry through to the 
two approximate implementations outlined above.
Specifically, in Section~\ref{section:impl_plugin}, we 
discuss the implicit procedures described in Equation~\eqref{eq:intro_isgd}, with concrete examples, 
and in Section~\ref{section:impl_nested} we analyze the stochastic fixed-point procedure of Equation~\eqref{eq:intro_sfp} with a full convergence analysis. 
To illustrate our theory, we present empirical results on 
both approximate implementations in Section~\ref{sec:simulations} 
and Section~\ref{section:example}, all showing the stability benefits of our approach.

%
%\remark{
%As mentioned above, the Proximal Robbins--Monro we study in this paper is an ``idealized procedure", i.e., 
%it is well-defined mathematically but, in general, it cannot be directly computed.
%Such idealized procedures are ubiquitous in statistics and optimization. 
%For example, the proximal
%point algorithm described above is approximated in practice even though it has a concrete mathematical definition. 
%% In this paper, the study of our suggested procedure follows an outline similar to how other idealized procedures are studied: first, we explore the properties of the idealized procedure as if it were computed exactly, and then explore the extent to which these properties carry through to approximate implementations.
%}

%This resolves the conflicting
%requirements for stability and convergence in classic stochastic approximation, and provides valuable flexibility in choosing the learning rate $\gamma_n$. The theoretical results of the following section 
%do confirm that implicit stochastic approximation 
%converges under minimal conditions, 
%maintains the asymptotic properties 
%of classic stochastic approximation, but also 
%has remarkable stability in short-term iterations.
%

\subsection{Related work and contributions}
\label{section:related}

There is voluminous literature on classical stochastic approximation. The early mathematical work 
by~\citet{robbins1951, sacks1958asymptotic, fabian1968asymptotic, nevel1973stochastic, robbins1985convergence, wei1987multivariate} established the fundamental properties, including convergence and asymptotic laws.
This work was subsequently pivotal in engineering, and particularly in systems identification and tracking~\citep{ljung1992stochastic,  benveniste1990adaptive};~see also the excellent review by~\citet{lai2003stochastic}.
More recently, there have been important developments in studying 
stochastic approximations through the lens of dynamical systems theory, spearheaded by~\citet{kushner2003stochastic} and~\citet{borkar2008stochastic}.
Roughly at the same time, stochastic approximations appeared in machine learning, usually in the form of stochastic gradient descent~(SGD) methods, and especially
in applications with large data sets and complex models~\citep{zhang2004solving, bottou2010}.

%
%However, classical stochastic approximations are  numerically unstable, and
%often impossible to apply  without extensive heuristics. In this
%paper, we introduce an implicit stochastic approximation procedure, defined in
%Equations~\eqref{eq:proxRM} and~\eqref{eq:proxRM_fp}, which aims at
%mitigating stability problems of classical approximation through proximal
%updates.  In the same way that the classical method of~\citet{robbins1951} is the stochastic analog of gradient descent
%in deterministic optimization, the method of implicit stochastic approximation, proposed in this 
%paper, is the
%stochastic analog of the proximal point
%algorithm~\citep{rockafellar1976monotone}.
%%  which is the quintessential methodin proximal optimization.  
%This fills a crucial gap in the literature of
%stochastic approximations.
%As visualized in Table~\ref{table:related}, the
%method we introduce is general enough to cover both cases where an analytic
%form of the objective function is known, and cases where no such form is known for the objective or its gradients.~\rev{For instance, in statistical estimation the likelihood is usually known, and 
%stochastic approximation can be used for scalable estimation. 
%However, in a sequential experiment we usually aim to optimize an objective function that is unknown~(e.g., optimize drug dosage), and the objective function values can only be queried sequentially.
%}

There are mainly two lines of literature that are directly related to our work, as depicted in Table~\ref{table:related}. 
In one line of work, the proximal update is deterministic and is performed after a classical stochastic update. 
For example, the forward-backward procedure of~\citet{singer2009efficient} and the proximal stochastic gradient procedure studied by~\citet{rosasco2014convergence, rosasco2016stochastic, bianchi2016dynamical} fall into this category---see Section~\ref{section:theory} for a related discussion.
Such procedures
first make the update $\tilde\theta_n = \theta_{n-1} - \gamma_n H(\theta_{n-1}, \xi_n)$, and then define $\theta_{n} = \mathrm{prox}_{\gamma_n f}(\tilde\theta_n)$, where $f$ is some convex regularization function.  
In our work, we wish to avoid making an explicit update, if possible, to ensure stability. 
A notable exception is presented in Section~\ref{section:impl_nested}, where we 
discuss the stochastic fixed-point procedure in Equation~\eqref{eq:intro_sfp}. 
This procedure involves multiple explicit updates within a nested procedure, which, however, do not introduce instability thanks to the problem structure.

Another line of work involves procedures as in Equation~\eqref{eq:intro_isgd}, where implicit updates are directly used in the update equation. Incremental proximal procedures~\citep{bertsekas2011incremental}, and implicit SGD~\citep{toulis2014statistical, toulis2017aos} fall into this category. 
The implicit update in~\eqref{eq:intro_isgd} can be solved efficiently 
in many statistical models~\citep[Algorithm 1]{toulis2014statistical}, including generalized linear models.
In numerical optimization and engineering, 
the stochastic proximal point algorithms studied by~\citet{bianchi2016ergodic, patrascu2017nonasymptotic, patrascu2019stochastic} are closely related, and thus 
different from our definition in~\eqref{eq:spp}.
% despite the naming resemblance, these should not be confused with the stochastic proximal algorithm of~\citet{rosasco2014convergence}.
%
Interestingly, all such procedures can be viewed as the plug-in versions of the proposed proximal Robbins--Monro in Equation~\eqref{eq:proxRM}.
%  We discuss this idea further in Section~\ref{section:impl_plugin}.

\renewcommand{\arraystretch}{1}
\begin{table}
\footnotesize
\caption{
\label{table:related}
\footnotesize {\em Depiction of related work. Modern procedures, such as SGD, are instantiations of the classical Robbins--Monro procedure~\citep{robbins1951}. 
% This method was initially conceptualized as a stochastic analog of root finding methods, such as Newton-Raphson or gradient descent. 
The proximal Robbins--Monro procedure we study in this paper leads to well-known implicit procedures, 
and also to novel procedures which can work 
even when the random component $\xi$ of the stochastic approximation cannot be observed directly.}
\vspace{5px}}
\begin{tabular}{ l | c c }
& \multicolumn{2}{c}{{\bf Solve:}~$\Ex_\xi(H(\theta, \xi)) = 0$.} \\
\makecell{\bf samples\\\bf $H(\theta, \xi)$}  & 
	\makecell{{\bf Classical Robbins--Monro}  \\
	$ \theta_n = \theta_{n-1} - \gamma_n H(\theta_{n-1}, \xi_n)$} & 
	\makecell{{\bf Proximal Robbins--Monro} \\ $ \theta_n = \theta_{n-1} - \gamma_n H(\theta_n^+, \xi_n)$} \\ 
\hline
 \makecell{can observe \\$\xi$ directly} & 
 		\makecell{stochastic gradient descent\\
  		\citep{coraluppi1969stochastic};
		\citep{zhang2004solving};\\
		\citep{bottou2010};
		natural gradients~\small\citep{amari1998natural};\\
		adaptive gradients~\citep{duchi2011adaptive}} & 
		\makecell{implicit stochastic gradients\small\\
  ~\citep{bertsekas2011incremental};\citep{bianchi2016ergodic}\\
  \citep{toulis2017aos};\\
  stochastic proximal gradients\\
  ~\citep{singer2009efficient};\\
  \citep{rosasco2014convergence}
  }
  \\  
 \makecell{cannot observe\\ $\xi$ directly} & \makecell{structural breaks/tracking, quantile estimation
   \\~\citep{benveniste1990adaptive}; \citep{robbins1951}}
 & \makecell{Prox-Stochastic Fixed Point\\ (Equation~\eqref{eq:spp},~Section~\ref{section:impl_nested})} 
\end{tabular}
\end{table}

%
%\begin{figure}[t!]
%\begin{center}
%\resizebox{14.5cm}{!}{\input{related_work2.tikz}}
%\end{center}
%\label{fig:related}
%	\caption{Graphical depiction
%	of related work. Modern popular procedures, such as stochastic gradient
%descent, are instantiations of the classical stochastic approximation method of Robbins-Monro~\citep{robbins1951}. 
%	The Robbins-Monro method was initially conceptualized as a stochastic analog of root finding methods, such as Newton-Raphson or gradient descent. Our work provides a stochastic approximation method  with proximal updates. Instantiations of our method include well-known existing procedures that employ proximal updates, such as implicit stochastic gradient descent. Additionally, it leads to novel procedures 
%with nested stochastic approximations, which can be applied even in cases where neither the objective function nor its gradient are known analytically.}
%\end{figure}

From a theoretical perspective, the central contribution of this paper is first the introduction of 
the proximal Robbins--Monro procedure as the stochastic analog of the classical proximal point algorithm.
This procedure differs from classical 
Robbins--Monro procedures by using the proximal point, $\theta_n^+$, in its iterations.
We provide a full analysis of the convergence properties of the new procedures in Section~\ref{section:theory}. 
This fills a gap in the literature that has remained open since classical stochastic approximation was introduced by~\citet{robbins1951} as the stochastic analog of gradient descent.
Our analysis shows that the proximal Robbins--Monro procedure is more stable numerically than classical Robbins--Monro, and is also less sensitive to 
hyperparameter tuning in achieving the best known convergence rates.
% for non-averaged stochastic approximation.%

From a practical perspective, the proximal Robbins--Monro procedure is generally infeasible.
We thus develop two approximate implementations.
First, in Section~\ref{section:impl_plugin} we discuss an implementation based on the plug-in principle, which leads to Equation~\eqref{eq:intro_isgd},
 and a large family of implicit SGD procedures~\citep{bertsekas2011incremental, toulis2014statistical}. These procedures are becoming increasingly popular thanks to their numerical
			stability compared to classical SGD.
			They are also easy to implement in a broad family of
			models, and their theoretical properties are now well understood~\citep{kulis2010implicit, bertsekas2011incremental,
			toulis2017aos, ryu2014stochastic, toulis2016towards, patrascu2017nonasymptotic, patrascu2019stochastic, asi2019importance}.

Second, in Section~\ref{section:impl_nested} we discuss 
an implementation of the proximal Robbins--Monro procedure based on the fixed point procedure of Equation~\eqref{eq:intro_sfp}. 
Importantly, this procedure can operate even when we cannot observe $\xi$ directly, which includes settings where $h(\theta)$ is not known analytically. We present a full convergence analysis of the procedure, which is particularly challenging due to its nested structure. 
In Section~\ref{section:example}, we also illustrate significant benefits in numerical stability through the classical quantile regression example of~\citet{robbins1951}.
As mentioned earlier, the fixed-point procedure and its theoretical analysis constitute a key contribution of this paper. 
We are unaware of other proximal methods, exact or approximate, that 
can be applied to pure stochastic approximation settings.
% , i.e.,  settings where $h(\theta)$ is analytically unknown, or can  only be sampled sequentially through experiments.
%where the underlying procedure is comprised of nested stochastic fixed points.
Stochastic fixed point procedures exist in the classical literature~\citep[e.g., Section 10.2]{borkar2008stochastic} but they usually lack a standard nonasymptotic error analysis~(similar to Theorem~\ref{thm:nested} in this paper), and so their stability properties are unknown.
%

%%%%  THEORY    %%%%%
\section{Theory of the stochastic proximal point algorithm}
\label{section:theory}

In this section, we analyze theoretically the stochastic proximal point algorithm in Equation~\eqref{eq:spp}.
Specifically, we study convergence~(Section~\ref{sec:asymptotic}),
asymptotic normality (Section~\ref{sec:normality}), and non-asymptotic
convergence rates (Section~\ref{sec:non_asymptotics}). 
We emphasize that the analysis here applies directly to the proximal Robbins--Monro procedure of Equation~\eqref{eq:proxRM}.
Later, we show that the theoretical properties studied here, and especially those that relate to numerical stability, pass onto the
approximate implementations of proximal Robbins--Monro.
All proofs can be found in Appendix~\ref{appendix:main}.
We need some notation first.~\StateAssumptions\

\assumeConvex\ is implied by convexity of $F$ but is weaker since monotonicity of its gradient $h$ is only required to hold at $\thetastar$. \assumeConvexStrict\ states that $F$ is strictly convex at $\thetastar$ (in particular, it implies that $\thetastar$ is unique).
\assumeStrongConvex\ is implied by strong convexity of $F$ but is weaker since, similarly to \assumeConvex, the quadratic lower bound is only required to hold with respect to $\thetastar$.
%
%Assumption~\aPotential\ will be used later, in Sections~\ref{section:impl_plugin} and~\ref{section:impl_nested}, where we consider concrete implementations of our procedure.
%
\assumeErrors\ was introduced by \citet{robbins1951},
and has since been standard in stochastic approximation analysis.
It simply states that the stochastic errors in the observations of $h$ have zero mean and uniformly bounded variances.
It could be weakened to include slowly growing errors, $\sigma_n^2$, provided that 
$\sum_{i=1}^\infty \sigma_i^2 \gamma_i^2 < \infty$.
%since bounded noise is a crucial condition for convergence. (Tbo: commented out since it is not true)
%
\assumeMain\assumeLind\ is the Lindeberg condition that is used to prove
asymptotic normality of $\thetaim{n}$, later in this section.

Overall, our assumptions are weaker than the assumptions in classical stochastic approximation because they refer to the idealized procedures of Equation~\eqref{eq:spp} and Equation~\eqref{eq:proxRM}; compare, for example, 
Assumptions \aGn--\aLind\ with assumptions (A1)--(A4) of \citet[Section 2.1]{borkar2008stochastic}, or the assumptions by \citet[Theorem 15]{benveniste1990adaptive}.
In comparison to forward-backward procedures~\citep[e.g.]{rosasco2016stochastic, bianchi2016dynamical}, we share common assumptions on Lipschitzness of the regression function $h$~(\assumeLip) and bounded second moments for the noise term~(\assumeErrors).
The main difference is that forward-backward procedures require certain ``fine-tuning" conditions for the learning rate. For example, assumption (A2) of ~\citet{rosasco2016stochastic} requires that 
$\gamma_n$ decays sufficiently fast with respect to the noise level in $\varepsilon$. Our procedure does not require such assumptions because the forward-backward steps are transposed~(the implicit step happens first), which adds numerical stability, as shown in Theorem~\ref{theorem:nonasymptotic}.
In some sense, our procedure is a form of ``backward-forward splitting".

\subsection{Convergence of stochastic proximal points}
\label{sec:asymptotic}
In Theorem~\ref{theorem:convergence}, we derive a proof of almost sure convergence of the stochastic proximal point algorithm, which mainly relies on the supermartingale lemma of \cite{robbins1985convergence}. 

\TheoremConvergence

The conditions for almost sure convergence of the stochastic proximal point are weaker than classical stochastic approximation. 
For example, in classical stochastic approximations
it is typically assumed that the iterates $\thetarm{n}$ are almost surely bounded.
See, for example, Assumption (A4) of \citet{borkar2008stochastic}.

%
% Non-asymptotic analysis
% 
\subsection{Non-asymptotic analysis}
\label{sec:non_asymptotics}

In this section, we derive upper bounds for deviance of the potential function, 
$\Ex(\P(\thetaim{n}) - \P(\thetastar))$, 
and the mean squared errors, $\Ex{\|\thetaim{n}-\thetastar\|^2}$.
This provides information on the rate of convergence, as well as 
the stability of the stochastic proximal point algorithm.
Theorem \ref{theorem:deviance} on deviance assumes non-strong convexity of $\P$, whereas Theorem \ref{theorem:nonasymptotic} on squared error assumes strong convexity.
\TheoremDeviance

There are two main results in Theorem \ref{theorem:deviance}.
First, the rates of convergence for the deviance of the stochastic proximal point algorithm are either
$\bigO{n^{-1+\gamma}}$ or $\bigO{n^{-\gamma/2}}$, 
depending on the learning rate, $\gamma$.
Second, there is a uniform decay of expected deviance 
towards zero,
% , since the constants $n_{0,1}, n_{0,2}, n_{0,3}$ can be made small, depending on what accuracy is desired.
%  desired accuracy  in the constants of the upper bounds in Theorem \ref{theorem:deviance}.
%
whereas in standard stochastic approximation under 
non-strong convexity, there is a term of the form $\exp(4L^2\gamma_1^2n^{1-2\gamma})$ \citep[Theorem 4]{moulines2011non}, which can 
amplify the initial conditions arbitrarily.
Thus, the stochastic proximal point algorithm, and consequently the proximal Robbins--Monro procedure, have 
similar asymptotic properties to classical stochastic 
approximation, but they are more stable numerically, and less sensitive to initial conditions or 
hyperparameter tuning.

\remark{The best rate of convergence for the proximal Robbins--Monro as shown in Theorem~\ref{theorem:deviance} is $O(n^{-1/3})$,
which  matches the best known rate for classical stochastic approximations with non-strongly convex objective~\citep[Theorem 4]{moulines2011non}. 
This rate is suboptimal since it is worse than the minimax rate of $O(n^{-1/2})$ that is achieved through Polyak-Ruppert averaging~\citep{ruppert1988efficient}.
We conjecture that our proposed procedure can also achieve the minimax rate through averaging,
but we leave this for future work.
}

\remark{
The proof of Theorem~\ref{theorem:deviance} presents some unique technical challenges, 
including an implicit inequality of the form $b_n + g(b_n) \le b_{n-1}$, with $g$ being a non-explicit, non-decreasing function.
Our strategy is to solve the reverse recursive inequality, $\tilde b_n(\beta) + g(\tilde b_{n}(\beta)) \ge \tilde b_{n-1}(\beta)$, 
in some parametric family, such as $\tilde b_n(\beta) = O(n^{-\beta})$, which is more tractable.
Then, it is easy to show that $\tilde b_n(\beta)$ is an upper bound for $b_n$, for any $\beta$.
Thus, a natural upper bound for $b_n$ is given by $b_n \le \arg\min_\beta \tilde b_n(\beta)$.
This solution strategy is reminiscent of the majorization-minorization idea~\citep{lange2010numerical}, 
and may be more broadly useful.
}

%

%% Main non-asymptotic theorem
\TheoremMSE

There are two main results presented in Theorem \ref{theorem:nonasymptotic}.
First, the rate of convergence for the expected errors,
$\Ex(\|\thetaim{n}-\thetastar\|^2)$, is $\bigO{n^{-\gamma}}$, 
which matches the rate of convergence for classical stochastic approximation under 
strong convexity \citep[Theorem 22]{benveniste1990adaptive}.
The best possible rate here is $O(1/n)$, which is also the minimax rate 
with strongly convex objectives.
Second, there is an exponential discounting of 
initial conditions, $\zeta_0$, regardless of the specification of the 
learning rate parameter $\gamma_1$ and the Lipschitz parameter $L$.
Another way to express this, is to consider the function $\omega_n = \log(\zeta_n/\zeta_0)$ under a noise-free setting~($\sigma^2=0$).
By studying this function with respect to $\gamma_1$ (and other problem parameters, such as convexity) we can study stability.
In particular, Theorem~\ref{theorem:nonasymptotic} shows that $\omega_n=-\log (1+2\gamma_1\mu) n^{1-\gamma}$.
In contrast, in classical stochastic approximation,
$\omega_n= L^2\gamma_1^2n^{1-2\gamma} - O(n^{1-\gamma})$, which can make 
the approximation diverge numerically 
if $\gamma_1$ is even slightly misspecified with respect to $L$
\citep[Theorem 1]{moulines2011non}.
Thus, as in the non-strongly convex case of
Theorem \ref{theorem:deviance}, the stochastic proximal point algorithm
has similar asymptotic rates to classical stochastic approximation, but is also more stable.
%
% We also note that the error bounds in Theorem~\ref{theorem:nonasymptotic}
% can be used to derive deviance bounds, in addition to Theorem~\ref{theorem:deviance}.

\remark{
When $\gamma=1$, misspecification of the learning rate parameter 
can indeed lead to arbitrary slowdown to a rate $O(\max\{n^{-1}, n^{-\log \kappa}\})$.
This is also true for classical stochastic approximation~\citep[Theorem 1]{moulines2011non},
and is generally a feature of stochastic first-order methods.
The key difference between the two procedures, as described above, is numerical stability.}
%%% Asymptotic normality

\subsection{Asymptotic normality}
\label{sec:normality}

Asymptotic distributions are well studied in classical stochastic approximation. Starting from~\citet{fabian1968asymptotic} there has been extensive work in identifying asymptotic distribution laws in
stochastic approximation.
In this section, we
leverage this theory to show when iterates from stochastic proximal point procedures 
can also be asymptotically normal.
The following theorem establishes this result 
using Theorem 1 of~\citet{fabian1968asymptotic};
see also \cite[Chapter II.8]{ljung1992stochastic}.
\TheoremNormality

Theorem~\ref{theorem:normality} shows that the asymptotic distribution of $\thetaim{n}$ 
is identical to the asymptotics of the classical Robbins--Monro procedure~\citep[for example]{fabian1968asymptotic}.
Intuitively, in the limit as $n$ grows, we have that $\thetaMed{n} \approx \thetaim{n-1} + \bigO{\gamma_n}$ with high probability, 
and thus the stochastic proximal point behaves like the classical approximation procedure.
%

%%% IMPLEMENTATIONS
%%
%%
%%
\section{The proximal Robbins--Monro procedure}
\label{section:impl_plugin}

In the following sections, we focus on the special case of the stochastic proximal point algorithm, where an 
unbiased estimate $H(\theta, \xi)$ of $h(\theta)$ is available, such that 
$\Ex_\xi(H(\theta, \xi)) = h(\theta)$. This leads to the proximal Robbins--Monro 
procedure introduced in Equation~\eqref{eq:proxRM}, which we repeat here:
\begin{equation}\label{eq:proxRM2}
\theta_n = \theta_{n-1} - \gamma_n H(\theta_n^+, \xi_n).
\end{equation}

This procedure is still infeasible due to the proximal term, $\theta_n^+$, 
and so we consider approximate implementations.
Specifically, we consider two different implementations depending on whether we 
have direct access to samples of $\xi$ or not.
The former leads to well-known stochastic procedures, and so our discussion will be relatively short.
Later, in Section~\ref{section:impl_nested}, we focus on the more challenging setting where 
we cannot directly sample $\xi$, and analyze the resulting procedures in more detail.
 % In this section, we consider an implementation following from the plug-in principle, which leads to a very practical estimation procedure,  known as implicit stochastic gradient descent. Westress that such an implementation is possible even though the regressionfunction $h(\theta)$ is not known or cannot be computed.
  
  \subsection{Approximate implementation with the plug-in principle}\label{section:plugin}
As mentioned earlier, when we can observe $\xi$ directly, we can apply the plug-in principle to
 implement the proximal Robbins--Monro update in~\eqref{eq:proxRM2}.
 Specifically, by definition of the proximal update in Equation~\eqref{eq:spp} and Assumption~\ref{assumption:errors} we have:
 $
 \Ex(\theta_n | \Fn{n-1}) = \theta_{n-1} - \gamma_n h(\theta_n^+) = \theta_n^+.
$
Plugging-in $\theta_n$ for $\theta_n^+$ in Equation~\eqref{eq:proxRM2} yields:
\begin{align}
\label{eq:implementation_b}
\thetaim{n} = \thetaim{n-1} - \gamma_n H(\thetaim{n}, \xi_n).
\end{align}

The iterate $\theta_n$ appears on both  sides of Equation~\eqref{eq:implementation_b}, and the resulting 
implicit update can be solved, in principle, since $H$ is known analytically.
%
% The procedure in Equation~\eqref{eq:implementation_b} appears  to be the most practical implementation of  implicit stochastic approximation, which justifies  why the implicit update of algorithm \eqref{eq:implementation_b} lends its name to implicit stochastic approximation.
One of the most popular applications of the procedure in Equation~\eqref{eq:implementation_b} is in iterative statistical estimation, where $H(\theta, \xi) = -\nabla \log \ell(Y; X, \theta)$, 
and $\ell$ corresponds to the likelihood of a random data point $\xi=(Y, X)$, at parameter value $\theta$.
For example, if in Equation~\eqref{eq:implementation_b} we use $H(\theta_{n-1}, \xi_n)$ instead of $H(\theta_n, \xi_n)$, this amounts to classical SGD, which 
is widely popular in optimization and signal processing~\citep{coraluppi1969stochastic}, and has been fundamental in modern machine learning 
with large data sets~\citep{amari1998natural, zhang2004solving, bottou2010, bottou2016optimization}.
% The theoretical properties of ISGD are fairly well understood. 
When we use the implicit update, as originally described in Equation~\eqref{eq:implementation_b}, 
then the resulting procedure is known as incremental proximal method in optimization~\citep{bertsekas2011incremental}, 
or as implicit stochastic gradient descent~(ISGD) in 
statistics and machine learning~\citep{toulis2014statistical}.
We refer readers to~\citep{bertsekas2011incremental} and \citep{toulis2017aos} for two complementary analyses of implicit SGD, including asymptotic and non-asymptotic errors; see also~\citep{bianchi2016ergodic, salim2019snake, bianchi:hal} for related analyses using monotone operator theory.

The substitution of $\theta_n^+$  with $\theta_n$ may naturally cause concerns about whether the stability properties of the proximal Robbins--Monro procedure carry over to the approximate implementation through ISGD.
All aforementioned related work generally points to the same fact: ISGD shows superior performance to classical SGD, both in theory and practice. In particular, ISGD has identical asymptotic efficiency 
and convergence rate as standard SGD, but it is significantly more stable numerically~\citep[Section 2.5]{toulis2014statistical}. 
% The numerical stability of ISGD is in fact inherited from the stability of  proximal Robbins-Monro as illustrated in Theorem~\ref{theorem:nonasymptotic}, where the initial conditions cannot be amplified due to misspecification of the learning rate.
 In contrast, in classical SGD the initial conditions can be amplified arbitrarily when 
the learning rate is misspecified, leading to numerical divergence~\citep[Theorem 1]{moulines2011non}.
To illustrate these stability advantages of ISGD we present two examples from the literature:
one example is on a linear normal model where the theoretical assumptions in Section~\ref{section:theory} of this paper hold,
and another example on a Poisson model where the assumptions do not hold because the objective is non-Lipschitz.

\subsubsection{Example: linear normal model}
\label{isgd:example_normal}
Let $\thetastar \in \Reals{p}$ 
be the true parameters of a normal model, 
$y | x \sim N( x^\top\thetastar, \sigma^2)$, where $x\in\Reals{p}$ and $y\in\Reals{}$.
Let $\xi = (y, x)$ denote one datapoint, and define $H(\theta, \xi) = -(y - x^\top \theta) x$ as above. 
Then, classical Robbins--Monro reduces to:
\begin{align}
\label{eq:sgd_normal}
\thetarm{n} & = (I-\gamma_n x_n x_n^\top) \thetarm{n-1}  + \gamma_n y_n x_n.
\end{align}
Procedure \eqref{eq:sgd_normal} is equivalent to classical SGD on the least-squares objective.
It is also known as the least mean squares filter (LMS) in signal processing, 
or as the Widrow-Hoff algorithm~\citep{widrow1960adaptive}.
From Equation~\eqref{eq:implementation_b}, the ISGD update for this problem can be solved in closed form:
\begin{align}
\label{eq:isgd_normal}
	\thetaim{n} & = \frac{1}{1+ \gamma_n \|x_n\|^2} \thetaim{n-1} + \frac{\gamma_n}{1+\gamma_n \|x_n\|^2} y_n x_n.
\end{align}
Procedure~\eqref{eq:isgd_normal} is also 
known as the normalized least mean squares filter (NLMS) in signal processing \citep{nagumo1967}.
From Equation~\eqref{eq:sgd_normal}, we see that it is crucial for classical SGD to have a well-specified learning rate parameter $\gamma_1$. For instance, assume fixed $\|x_n\|^2=c^2$, for simplicity, then if $\gamma_1 c^2 \gg 1$ 
the iterate $\thetarm{n}$ of classical SGD will diverge to a value 
$\bigO{2^{\gamma_1 c^2}/\sqrt{\gamma_1 c^2}}$~\citep[for example]{toulis2014statistical}.
In contrast, a very large $\gamma_1$ will not cause divergence in 
ISGD, but it will simply put more weight 
on the $n$-th observation, $y_n x_n$, as can be seen in Equation~\eqref{eq:isgd_normal}. 
Intuitively, from a statistical perspective, ISGD specifies an averaging of old and new information, by weighing the estimate and observation according to the inverse of information, 
$(1+\gamma_n \|x_n\|^2)$.

The stability advantages of ISGD on classical SGD in the normal model are further illustrated in  the numerical simulations of Section~\ref{sec:simulations}.

\subsubsection{Example: Poisson regression}
\label{isgd:example_poisson}

Following the setup in Section~\ref{isgd:example_normal}, 
now let $y | x \sim \mathrm{Pois}(e^{x^\top\theta_\star})$, where \text{``Pois''} denotes the Poisson density.
Then, the classical SGD procedure reduces to:
\begin{align}\label{eq:sgd_poisson}
\theta_n = \theta_{n-1} - \gamma_n (y_n - e^{x_n^\top\theta_{n-1}}) x_n.
\end{align}
The implicit SGD procedure for this problem is equivalent to:
\begin{align}\label{eq:isgd_poisson}
\theta_n = \theta_{n-1} - \gamma_n (y_n - e^{x_n^\top\theta_{n}}) x_n.
\end{align}
The implicit update can be easily implemented through the update
$\theta_n = \theta_{n-1} - \lambda x_n$, 
where $\lambda = f_n(\lambda)$ is the fixed point of $f_n(s) = \gamma_n (y_n - e^{x_n^\top\theta_{n-1} + s \|x_n\|^2})$.
Since $f_n$ is non-increasing, the search bounds for its fixed point
are to be found in $[0, f_n(0)]$ or $[f_n(0), 0]$ depending on whether $f_n(0) > 0$ or $f_n(0) < 0$, respectively.

Regarding stability, we can see that the updates in Equation~\eqref{eq:sgd_poisson} are extremely sensitive to specification of $\gamma_n$ due to the non-Lipschitzness of the objective in the Poisson model.
Implicit SGD, on the other hand, is more stable than classical SGD thanks to the implicit update. 
For example, when we start at $\theta_0=0$ and $\|x_1\|=O(1)$, the next 
iterate, $\theta_1$, will be $O(e^{\gamma_1 y_1})$ in classical SGD, which diverges arbitrarily as $\gamma_1$ increases.
On the other hand, implicit SGD has a very different behavior thanks to the implicit update in~\eqref{eq:isgd_poisson}:
when $\gamma_1$ is small such that $\gamma_1 y_1 \ll 1$, then $\theta_1$ is $O(\gamma_1 y_1)$; 
but when $\gamma_1$ is large, then $\theta_1$ asymptotes to $O(\log y_1)$.

These stability advantages of ISGD over classical SGD in the Poisson model are further illustrated in  the numerical experiments of Appendix~\ref{appendix:poisson}.
% Section~\ref{sec:simulations}.

\subsection{Approximate implementation with proximal stochastic fixed points}
\label{section:impl_nested}

In this section, we consider cases where we cannot observe  directly the random component $\xi$ 
of $H(\theta, \xi)$.
As mentioned earlier, this includes cases where the analytic form of $h(\theta)$ or $H(\theta, \xi)$ is unknown, and may only be sampled through, say, a sequential experiment.
% As already discussed in Section~\ref{section:introduction} and Section~\ref{section:related}, the procedure described in Equation~\eqref{eq:proxRM_fp} cannot be directly applied to this setting, since the intermediate value $\thetaMed{n}$ cannot be computedwithout knowledge of the regression function, $h(\theta) = \Ex{\Y{\theta}}$. 
%
We thus present an approximate implementation of the proximal Robbins--Monro 
procedure based on nested stochastic approximations that can be used without any auxiliary knowledge of the
estimation problem.  The nested procedure is in fact a proximal form of a fixed-point stochastic
approximation procedure~\citep[Section 10.2]{borkar2008stochastic}, which, however, is run
only for a finite number of steps. 
% To the best of our knowledge there is noanalysis of such procedures in the literature, so our convergence analysis in Theorem~\ref{thm:nested} applies novel techniques which may be of general interest.
%
Section~\ref{section:example} illustrates the benefits of the nested procedure
in quantile estimation. 

To begin, we first take expectations in the proximal Robbins--Monro iteration:
$$
\Ex\big(\theta_n - \theta_{n-1} - \gamma_n H(\theta_n^+, \xi_n)\big) = 0
\Rightarrow 
\Ex\big(\theta_n^+ - \theta_{n-1} - \gamma_n H(\theta_n^+, \xi_n)\big) = 0.
$$
The key idea is then to treat $\theta_n^+$ as the solution 
to $\Ex_\xi\big(\theta - \theta_{n-1} - \gamma_n H(\theta, \xi)\big) = 0$, 
and solve this characteristic equation through a separate, standard
stochastic approximation procedure. At every $n$-th iteration, we therefore
run a Robbins--Monro procedure, $w_k$, for $K$ steps  as follows:
\begin{equation}
	\label{eq:sfp}
	\begin{split}
		w_1 &= \thetaim{n-1},\\
		w_k &= w_{k-1} - a_k \big(\gamma_n H(w_{k-1}, \xi_k) + w_{k-1}
		- w_1\big),\quad 1 < k\leq K,\\
		\thetaim{n} &= w_k.
	\end{split}
\end{equation}

At first, it may seem that this procedure is affected by the same stability issues as classical stochastic approximation. However, our convergence result that follows will show that this is not true. For intuition, 
note that for fixed $n$ the sequence $(w_k)_{k\geq 1}$ is a standard Robbins--Monro
procedure applied to a different minimization problem:
\begin{equation}
	\label{eq:min}
	\min_{\theta\in\Theta} \left\{ \frac{1}{2\gamma_n} \|\theta-\theta_{n-1}\|^2 + \P(\theta)\right\}.
\end{equation}
What we gain compared to applying the classical Robbins--Monro method to $h$ directly, is that the objective function in Equation~\eqref{eq:min} is now strongly convex, even when $\P$ is not.
With this formulation, it is easy to verify that $\thetaMed{n}$ is the solution to this optimization problem, so that $w_k\to\theta_n^+$. 
Therefore, the problem structure that we designed allows the application 
of explicit updates, without compromising numerical stability.
We illustrate this point in Section~\ref{section:example}.

%To proceed with an analysis, we make use of strong convexity in Assumption~\ref{assumption:convexity}(\ref{A:h_inward2}).
%This allows an exponential discounting of the initial conditions, even when $\delta$ is very small, 
%as shown in the following theorem.
%
% \LemmaChi
%
%
% \LemmaXi
%
%
% \LemmaIdealized
%
%
\TheoremNested

\vspace{2mm}
Theorem~\ref{thm:nested} shows two key results.  First, the initial conditions
of the nested procedure are forgotten exponentially fast at a rate which can be
made arbitrarily close to $(1+\gamma\mu)^{-n}$, which was also true in the
idealized procedure. Second, an approximation error smaller than $\epsilon$ can
be obtained by choosing $n=O\big(\log\frac{1}{\epsilon}\big)$, and
$K=O\big(\frac{1}{\epsilon^2}\big)$, where $K$ is the number of iterations in
the inner procedure.  Taken together, these choices imply a total number of
gradient observations of order
$O\big(\frac{1}{\epsilon^2}\log\frac{1}{\epsilon}\big)$. 
In comparison, under
the same assumptions, the stochastic proximal point algorithm
(Section~\ref{section:theory}) and the standard Robbins--Monro procedure
achieve an approximation error smaller than $\epsilon$ using
$O\big(\frac{1}{\eps^2}\big)$ observations. Hence, the approximate
implementation of Equation~\eqref{eq:sfp} incurs a small (logarithmic)
overhead in terms of number of observations required to achieve a given level of
accuracy. Experiments in Section~\ref{section:example} and
Appendix~\ref{appendix:poisson}, however, show that this overhead may be
negligible in practice and that the stability benefit of procedure \eqref{eq:sfp} is
preserved without sacrificing accuracy, even when restricted to run for the
same amount of time as other methods.

%
%
%\remark{The proposed proximal stochastic fixed-point procedure and its theoretical analysis, constitute a key contribution of this paper.  We are unaware of other (approximate) proximal methods that address settings where the objective is analytically unknown, and where the underlying procedure is comprised of nested stochastic fixed points.
%}

\remark{The proof of Theorem~\ref{thm:nested} is technically challenging due to the nested nature of the
procedure. 
This requires careful balancing of the accumulation of approximation
errors from the inner iteration jointly with the rate of convergence of the
idealized procedure.  
To the best of our knowledge, there are are no such nonasymptotic analyses of stochastic fixed-point procedures in the literature.
The proof of Theorem~\ref{thm:nested} therefore applies novel techniques, which may be of general interest.
We also note that convergence of the nested procedure when $\P$ is non-strongly convex is an open question, which we leave 
for future work.
}

\remark{The nested nature of the procedure described in Equation~\eqref{eq:sfp} is reminiscent of the Catalyst scheme of~\citet{lin2015universal}, which is a general acceleration technique for first-order optimization methods. Similar to the Catalyst scheme, our procedure \eqref{eq:sfp} approximately computes a proximal update at each iteration. The key difference is that we analyze how to perform this approximate computation, whereas the Catalyst scheme assumes oracle access to such computation. Furthermore, the main focus of the Catalyst scheme is to achieve acceleration à-la-Nesterov with the use of a momentum term, while our focus is to analyze the stability of proximal updates.}

In the following sections, we illustrate the use of the 
nested procedure of Equation~\eqref{eq:sfp} and the use of
Theorem~\ref{thm:nested} through a simulated study on the normal model of Section~\ref{isgd:example_normal}, and the classical quantile
estimation problem of~\citet{robbins1951}.

\section{Simulated studies on stability}\label{sec:simulations}

Here, we investigate empirically the stability of plug-in implementations of the
proximal Robbins--Monro procedure presented in Section~\ref{section:impl_plugin}. Specifically, we present results for the normal linear model 
of Section~\ref{isgd:example_normal}. In Appendix~\ref{appendix:poisson}, we present results for the Poisson model of Section~\ref{isgd:example_poisson}, which are 
even more favorable towards the proximal Robbins--Monro than the normal model.

Our simulation setting is as follows. We consider parameters $\theta_\star\in\Reals{p}$, $p=6$, such that 
$\theta_{\star, j} = e^{-j} (-1)^j$, for $j=1, \ldots, 6$; and $y|x \sim N(x^\top\theta_\star, \sigma^2)$, 
where $x\sim N_p(0, \Sigma)$ is a $p$-variate normal with $\Sigma = 2 I + u u^\top$, with $u$ a column vector with $p$ iid uniform random variables, $U(0, 1)$; we set $\sigma^2=4$.
We estimate recursively $\theta_\star$ using the procedures of SGD and ISGD as introduced in 
Equations~\eqref{eq:sgd_normal} and~\eqref{eq:isgd_normal}, respectively. 
We also use the stochastic fixed point method of Equation~\eqref{eq:sfp} to estimate $\theta_\star$. To satisfy the conditions of Theorem~\ref{thm:nested} we set:
\begin{align}\label{eq:specs_sfp_normal}
a = \log(L/\mu),~K = 2a(1+\gamma_1 L)^2,~a_k = 2a/K.
\end{align}
The parameters $L, \mu$ are estimated directly from data~(in the linear model, these values 
correspond to the maximum and minimum eigenvalue of $\Sigma$, respectively).~Finally, the learning rate for all procedures is set as $\gamma_n = \gamma_1/n$ across iterations, and 
we vary $\gamma_1$ in the experiment to check the sensitivity of the procedures to specification of the learning rate.

As we vary $\gamma_1$, we replicate datasets of size $N=10,000$ based on the above model setup, 
and run the three procedures above for a total wall clock time of 1 second. For every replication, 
we calculate the trajectory of log mean squared error~(log-MSE) of all procedures, $m_{j,n} = \|\theta_{j, n} - \theta_\star\|^2$, where $\theta_{j, n}$ is the $n$-th iterate of procedure $j$ within the data replication.
From this series, we are interested in two summary statistics. First, the ``mean-level" of the log-MSE, $m_{j, n}$, 
which corresponds to the level around which the log-MSE ``settles''. To calculate this number, 
we fit an AR(1) model on the series $m_{j, n}$, and then 
calculate the stationary limit $b_1/(1-b_0)$, where $b_1$ is the estimated slope coefficient and $b_0$ is the estimated intercept in the model. 
Second, we are interested in the maximum value, $\max_{i : t_{j,i} \le 1} \{m_{j, i}\}$, of log-MSE across iterations, where $t_{j,i}$ is the wall clock time until iteration $i$ for method $j$. This acts as a proxy for the sensitivity of the procedure.

\begin{figure}[t!]
\centering
	\label{figure}
	\includegraphics[width=1\textwidth]{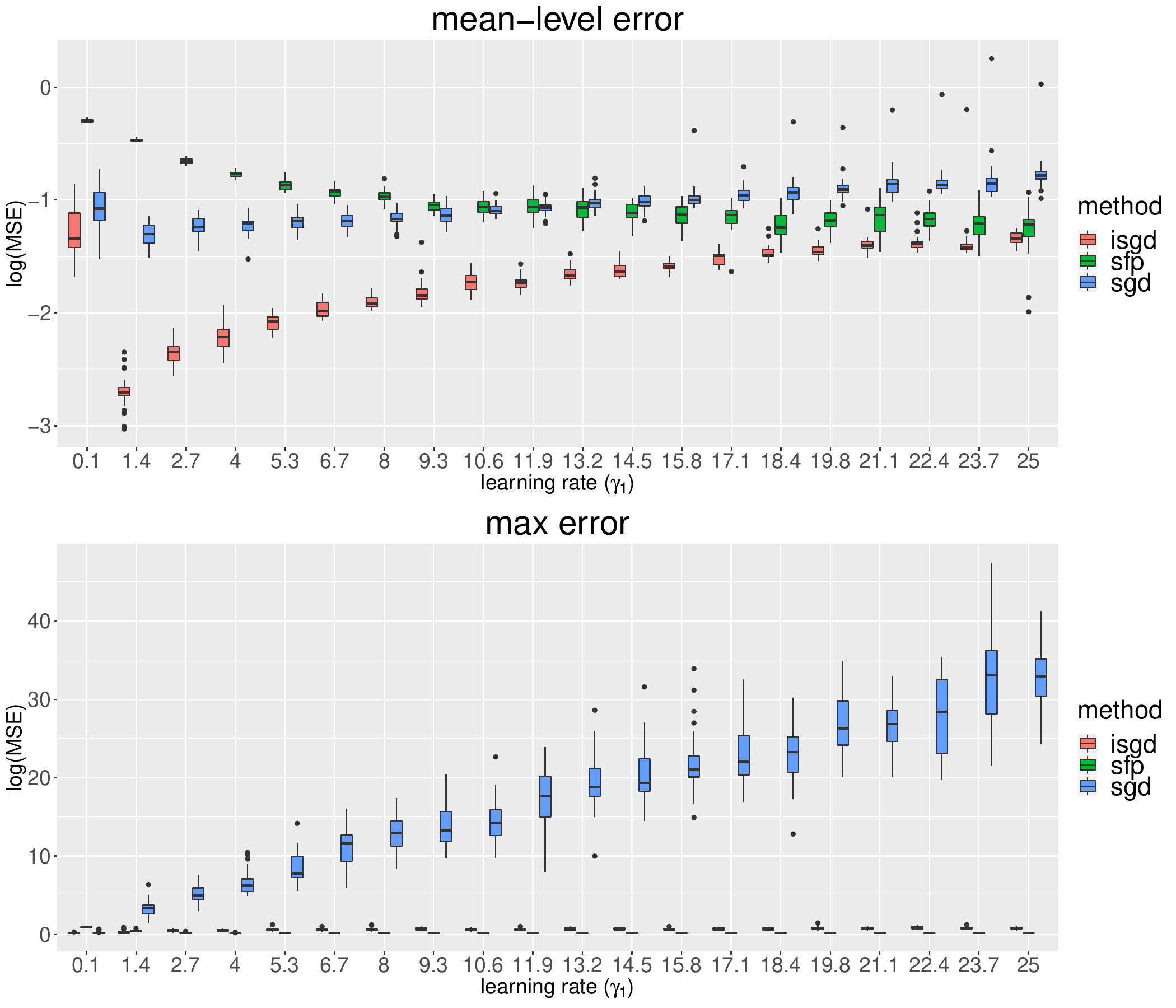} 
	\caption{\textbf{Top:} Boxplots of mean-level log-MSE over 50 replications of (i) the classical Robbins--Monro
procedure~(``sgd")~of Equation~\eqref{eq:sgd_normal}; 
(ii) the nested implicit stochastic approximation procedure~(``sfp") of Equation~\eqref{eq:sfp};
and (iii) the implicit SDG procedure~(``isgd") of Equation~\eqref{eq:isgd_normal}.
{\bf Bottom:} Boxplots of maximum lo- MSE over all iterations for each method.
Each procedure runs for a total of 1 sed. of wall-clock time.
	}
\label{figure:normal}
\end{figure}

Figure~\ref{figure:normal} shows the results of this experiment. For any value of $\gamma_1$, 
Figure 1(top) shows the boxplot of the mean-level log-MSE for each procedure. We see that each  procedure
behaves differently. Across all $\gamma_1$ values, ISGD performs best, and also remains robust. The stochastic fixed-point procedure (SFP) starts from worst performance for small $\gamma_1$. However, as $\gamma_1$ increases 
SFP keeps improving in MSE, and its variance increases as well. This can be explained by the SFP specification in Equation~\eqref{eq:specs_sfp_normal}, where larger $\gamma_1$ lead to larger $K$. Since 
we keep the computation budget (measured in wall-clock time) fixed, this means that as $\gamma_1$ increases 
SFP performs more inner iterations (large $K$) but fewer outer iterations (small $n$).
In contrast, classical SGD is the most unstable procedure. We see that its MSE steadily increases, while the maximum MSE (Figure~\ref{figure:normal}, bottom) varies widely, as $\gamma_1$ increases.

This experiment illustrates a key point of our paper: 
classical Robbins--Monro methods are sensitive to parameter specifications, 
while proximal Robbins--Monro methods, and even approximate implementations of it, remain stable in a wide range of specifications.

\section{Application: Iterative quantile estimation}
\label{section:example}
%%%
In their seminal paper,~\citet{robbins1951} applied stochastic approximations in iterative quantile estimation. In this problem, 
$H(\theta, \xi)$ corresponds to a sample drawn from a distribution with cumulative distribution function $Q(\theta)$. The goal 
is to estimate $\theta_\star$
such that $Q(\theta_\star) = \alpha$, for given $\alpha\in(0, 1)$.
A relevant application from medicine and toxicology is the estimation of the dose that is lethal to 50\% of experimental subjects, known as LD50~\citep{grieve1996likelihood}.

% In particular, consider a random variable $Z$ with a cumulative distribution function $F$. An experimenter wants to find the point $\thetastar$ such that  $F(\thetastar)=\alpha$, for a fixed $\alpha \in (0, 1)$.  The experimenter cannot draw samples of $Z$, but has access to the random variable $W_{\theta} = \mathbb{I}\{Z \le \theta\} - \alpha$, for any value of $\theta$.

In more detail, consider a random variable $\xi$ with cumulative distribution function $Q$. An experimenter wants to find the point $\thetastar$ for which  $Q(\thetastar)=\alpha$, for some fixed $\alpha \in (0, 1)$.  Let $h(\theta) = Q(\theta)-\alpha$. 
The experimenter cannot observe $\xi$ directly, but has only access to 
$\mathbb{I}\{\xi \le \theta\}$, for any value of $\theta$.
\citet{robbins1951} showed that the following iterative procedure,
\begin{align}
\label{eq:rm_quantile}
\theta_{n} = \theta_{n-1}-\gamma_n H(\theta, \xi_n),
\end{align}
where $H(\theta, \xi) =  \mathbb{I}\{\xi \le \theta\} - \alpha$, converges to $\theta_\infty$ for which $\Ex(H(\theta_\infty, \xi))=0$.
Consequently, it solves $\Ex(\mathbb{I}\{\xi\le\theta_\infty\})-
\alpha = Q(\theta_\infty)-\alpha = 0$, and by monotonicity of $Q$, we obtain $\theta_\infty = \theta_\star$.

Despite such theoretical convergence, the numerical stability of the Robbins--Monro procedure can be
 challenged by the following result.
\begin{proposition}
\label{prop1}
Assume that $\theta_0<\theta_\star$  and that
	$\theta_0+\gamma_1\alpha >\theta_\star$, then for any $\eps>0$ such that
	$\theta_0+\gamma_1\alpha>\theta_\star+\epsilon$, with probability
	$1-Q(\theta_0)$, the number of iterations $N_\epsilon$ of procedure~\eqref{eq:rm_quantile} required to
	approximate $\theta_\star$ within accuracy $\epsilon$ is lower-bounded:
	\begin{equation}
		\label{eq:lb}
		\log N_\epsilon \geq
		\frac{\theta_0+\gamma_1\alpha -\theta_\star-\epsilon}{(1-\alpha)\gamma_1}
		\;.
	\end{equation}
\end{proposition}
\begin{proof}
	With probability $1-Q(\theta_0)$ the first iterate of \eqref{eq:rm_quantile} is
	$\theta_1 = \theta_0 + \gamma_1\alpha>\theta_\star$, where the inequality
	is by assumption. Conditioned on this event, the progress in each
	subsequent iteration, namely $\theta_{n}-\theta_{n-1}$, is upper-bounded by
	$\gamma_n (1-\alpha)$ with probability 1 as long as
	$\theta_n >\theta_\star$. This implies that
$
		\theta_n \geq \theta_0 +\gamma_1\alpha - (1-\alpha)\sum_{k=2}^n\frac{\gamma_1}{k}
		\geq \theta_0 +\gamma_1\alpha - (1-\alpha)\gamma_1\log n$.\qed
\end{proof}

Proposition~\ref{prop1} essentially shows that there are values of the learning rate
parameter $\gamma_1$ and initial estimate $\theta_0$ for which the classical Robbins--Monro procedure may be stuck indefinitely. 
For example, let $Q$ be the standard normal distribution, and let
$\alpha=0.999$, so that $\theta_\star = 3.09$ is the solution.  Suppose also
that $\gamma_1 = Q'(\theta_\star)^{-1}\simeq 297$, which is the learning rate value suggested
by standard theory~\citep{nemirovski2009robust}.  Let $\theta_0 = -10$ and
suppose that $H(\theta_0, \xi_1) = -\alpha$. It
follows that 
$$
\theta_1 = -10 - \gamma_1 (-\alpha) = -10 + \gamma_1\alpha 
\approx 287 \gg \theta_\star.
$$
From there, the Robbins--Monro procedure makes progress by at most $\gamma_i
(1-\alpha)\simeq \frac{297}{i}\cdot 10^{-3}$ at each step.
Thus, the number of
iterations required to return back from $\theta_1$ to a region near $\theta_\star$ 
is at the order of $e^{956}$. In other words,
the procedure gets stuck at large values of
$\theta$, where the derivative of the objective is negligible. 
% The case where $\theta_1>\theta_\star$ follows from a symmetric argument.

This numerical example illustrates that a misspecification of
$\gamma_1$ can dramatically amplify the initial conditions in classical stochastic approximation, 
and affect convergence.
It is therefore interesting to investigate whether the proximal Robbins--Monro method
offers an improvement.

\subsection{Stability of the proximal stochastic fixed points}
\label{sec:quantiles}
In the context of quantile estimation, the stochastic fixed point procedure of Equation~\eqref{eq:sfp} 
can be written as follows:
\begin{align}
\label{eq:sfp_quantile}
w_1 &= \thetaim{n-1},\nonumber\\
		w_k &= w_{k-1} - a_k \big(\gamma_n H(w_{k-1}, \xi_k) + w_k
		- w_1\big),\quad 1< k\leq K, \nonumber\\
		\thetaim{n} &= w_k,
\end{align}
where $H(\theta, \xi) = \mathbb{I}\{\xi \le \theta\}-\alpha$, $\gamma_n=\gamma_1, a_k=2a/K$, and $\gamma_1, a$ and $K$ are constants to be defined. 

Before presenting our numerical experiments, 
we discuss intuitively why the nested procedure in
Equation~\eqref{eq:sfp_quantile} improves upon the classical Robbins--Monro method in Equation~\eqref{eq:rm_quantile}, 
and also discuss how to define the constants according to Theorem~\ref{thm:nested}.
We address these two issues successively.
First, consider the idealized case where $K=\infty$. 
In this case, the  iteration in Equation~\eqref{eq:sfp_quantile} converges to the solution of the following 
fixed-point equation:
\begin{displaymath}
w_\infty = \theta_{n-1}-\gamma_n \big(Q(w_\infty)-\alpha\big).
\end{displaymath}
The next iterate, $\theta_n$, is simply defined as $\theta_n=w_\infty$. 
It is easy to verify the stability of this fixed point. 
For example, if $\theta_{n-1}< \theta_\star$, then
$\theta_{n-1}<\theta_n<\theta_\star$;
and, conversely, if
$\theta_{n-1} > \theta_\star$, then $\theta_\star<\theta_n <\theta_{n-1}$. That
is, the idealized procedure with $K=\infty$ always
pulls back in the right direction towards $\theta_\star$, 
and thus always makes progress towards the global solution. Convergence is also extremely
fast, as shown in the proof of Theorem~\ref{thm:nested}. To illustrate
numerically, consider the example of the previous section where the classical
Robbins--Monro procedure did not converge. Using the same 
numbers, at the second iteration the idealized procedure will calculate:
$$
\theta_1 = -10 - 297 \big(Q(\theta_1) - .999\big),
$$
which solves to $\theta_1 \approx 1.74$; if we keep iterating, the idealized procedure will be 0.01-close to $\theta_\star$ by 
the hundredth iteration. This is a vast improvement compared to
the classical Robbins--Monro method, which remains stuck virtually forever.

Second, consider the actual nested procedure in
Equation~\eqref{eq:sfp_quantile}, where $K$ is finite. Theorem~\ref{thm:nested}
shows that the procedure maintains the nice convergence and stability
properties of the original procedure under certain assumptions.  The
assumptions in this case can be greatly simplified if we consider that for the
normal distribution, the probability density function is upper-bounded. Hence, $L\leq 1$ and Theorem~\ref{thm:nested} suggests the following choice of hyperparameters for the nested procedure:
\begin{align}
\label{eq:a1}
\gamma_n = \gamma_1, a = \frac{1}{(1 + \gamma_1)^2}, 
\text{ and } K=50.
\end{align}
Note in particular that this choice of parameters satisfies $K \geq 2a
(1+\gamma_1 L)^2$, as required by the theorem.  We can define the constants in a similar manner for
arbitrary distributions from an upper bound on the probability density
function.
Next, we evaluate numerically the (approximate) proximal Robbins--Monro procedure resulting 
from the aforementioned choice of hyperparameters.

% In the following section we give an extensive experimental comparison between implicit stochastic approximation and the classical Robbins-Monro method.  

\subsection{Numerical evaluation}

Here, we conduct a numerical evaluation of our proposed nested procedure~in
Equation~\eqref{eq:sfp_quantile}, using the parameter settings of
Equation~\eqref{eq:a1}, and compare it with the classical Robbins--Monro procedure in Equation~\eqref{eq:rm_quantile}. For a fair comparison, both methods run for a total of 1 second of wall-clock time. 
As the iteration complexity is similar for both methods, 
the classical Robbins--Monro procedure runs for $N$ iterations, whereas the stochastic fixed point 
runs roughly for $N/K$ outer iterations, and $K$ inner iterations.
This way, the total number of random samples (gradient
observations) used by our procedure similar to those in the classical
procedure.

As mentioned before, $Q(\theta)$ is here the cumulative distribution function of the standard normal,
$\alpha=0.999$ and $\theta_0=-10$. The quantity to be estimated is
$\theta_\star\approx 3.09$, for which $Q(\theta_\star) = \alpha$. 
For different values of $\gamma_1$ we compare the
Robbins--Monro procedure to our proposed fixed point procedure in Equation~\eqref{eq:sfp_quantile}, with $K=50$. 
% As also explained before,  both methods run as many iterations as possible for a total of 1 second of wall-clock time.
%
For each value of $\gamma_1$, the experiment is
replicated 100 times, and we report an average of all 
final estimates from both procedures.
% $\theta_N$ for Robbins--Monro, and $\theta_{N/K}$ for the nested procedure.
The results of this experiment are shown in Figure~\ref{figure:RM}.

\begin{figure}[t!]
\centering
\includegraphics[width=\textwidth]{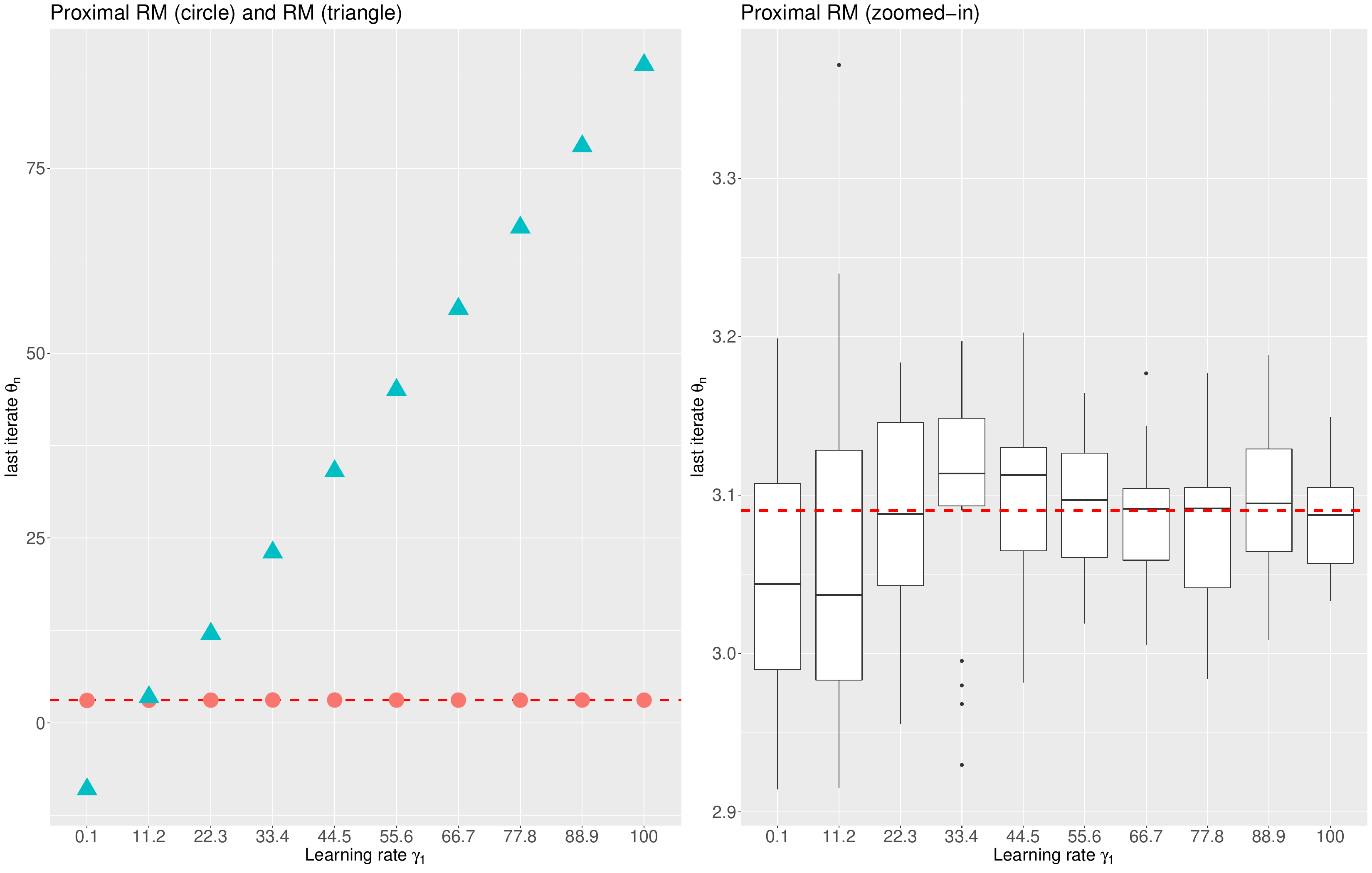}
	\caption{\label{figure:RM}\textbf{Left:} boxplots of 100 replications of the Robbins--Monro (RM)
procedure of Equation~\eqref{eq:rm_quantile} and of Proximal Robbins--Monro(Prox-RM), approximately 
implemented by~\eqref{eq:sfp_quantile}; averages of last iterates for RM and Prox-RM are indicated as triangles and circles, respectively. Each procedure runs  for a total of 1 second of wall-clock time.
	\textbf{Right:} Zoom in to proximal RM (note
	the different scale on the $y$-axis). Left plot is in log-scale.
	% negative	values (for $\gamma_1=0.1$ and $\gamma_1=.5$) are not shown for RM.
%
The dashed horizontal line depicts true value, $\thetastar= 3.09$.
Both procedures start from $\theta_1=-10$, 
	and prox-RM is implemented following Equation~\eqref{eq:a1}.
We see that prox-RM is more stable to specification of $\gamma_1$ than classical RM.
% increases, the classical
%RM method overshoots and essentially remains stuck, 
%	which explains the flat boxplots.  In contrast, Prox-RM remains robust, with final
%	iterates estimating the true value well, except for a small bias 
%	at very small or very large values of the learning rate.
}
\label{figure:quantiles}
\end{figure}

In the left plot, we observe that the classical Robbins--Monro procedure indeed suffers from numerical
instability.  In particular, as predicted by
Proposition~\ref{prop1}, when $\gamma_1$ increases beyond
$\frac{\thetastar-\theta_0}{\alpha}\simeq 13.1$, the iterates overshoot and
remain virtually stuck for all subsequent iterations. 
%This explains why the
%boxplots for the Robbins--Monro method look flat for large values of
%$\gamma_1$; for small values of $\gamma_1$ the iterates
%also do not vary much because their variance depends on $\gamma_1$.
%
In fact, there is only a small range of values for $\gamma_1$ (visually 
between values 11 and 15), for which $\gamma_1$ is big enough to allow convergence, yet
small enough to prevent the aforementioned numerical instability.  Not shown in
the figure, the estimates from the  Robbins--Monro procedure are negative for very small learning
rates; for example, when $\gamma_1=0.1$ the average estimate is $-8.8$. This is
close to the starting point, $\theta_1=-10$, and indicates that the classical
procedure makes little progress  when the learning rate is
very small.  This shows that classical Robbins--Monro approximations are extremely
sensitive to  specification of the learning rate values.

The results for proximal Robbins--Monro, as approximately implemented by the fixed point procedure of Equation~\eqref{eq:sfp_quantile}, are drastically different.  In the left subplot of
Figure~\ref{figure:quantiles}, we see that proximal Robbins--Monro neither overshoots nor undershoots in contrast to the classical
procedure.  We see that the proximal procedure maintains a remarkable numerical stability
across the entire range of learning rate values. The procedure is also
statistically efficient in that the final iterates are
centered around the true value (red dashed line) with small variance around it.
This is better shown in the right subplot of Figure~\ref{figure:quantiles},
which only focuses on the estimates of the proximal Robbins--Monro. 
We note that a slight bias exists for very small or
very large values of the learning rate. 
For example, the average parameter estimate is roughly $3.04$ when
$\gamma_1=0.1$. The bias goes
away, however, with increased sample sizes. 
% Notably, the bias is stable even for large values of $\gamma_1$. 

We emphasize again that, similar to the simulation studies of Section~\ref{sec:simulations}, the stochastic fixed point procedure is implemented
in a fully data-driven way, by choosing its parameters using
Equation~\eqref{eq:a1}, as prescribed by Theorem~\ref{thm:nested}.

\section{Concluding remarks}
The theoretical and empirical results presented in this paper 
point to key advantages of the proposed proximal Robbins--Monro procedure,
as defined in Equation~\eqref{eq:proxRM}, over the classical procedure of Robbins--Monro.
One such advantage is numerical stability.
Our theoretical analysis showed that such stability is obtained without sacrificing convergence or efficiency.
However, the proposed  method is idealized because it can only be approximately implemented.

While in this paper we propose two approximate implementations that work well in general settings, there remain several open questions.
First, although the implicit stochastic gradient methods described in Equation~\eqref{eq:implementation_b} are easy to 
implement in a wide class of models (e.g., generalized linear models, M-estimation), 
their application to large-scale  non-convex settings, such as neural networks, has just started to emerge~\citep{fagan2018robust}. In this context, the stability of proximal Robbins--Monro approximations appears to be beneficial 
as predicted by the theory in this paper. More work needs to be done, however, to analyze these settings theoretically, and to leverage the added flexibility in designing the learning rate sequence.

Second, extending the scope of nested, fixed point implementations of proximal Robbins--Monro as in Equation~\eqref{eq:sfp}, is interesting especially because the procedure can operate even when only
samples from the objective are available. This introduces minimal modeling assumptions, which may be desirable in many settings, such as in econometric models, or in sequential experimentation of clinical trials. It is also an open question 
whether the substantive results of the quantile estimation example of Robbins--Monro presented in Section~\ref{sec:quantiles} extend to broader applications and domains.
We provided positive empirical evidence  in the simulations of Section~\ref{sec:simulations} and~Appendix~\ref{appendix:poisson},  and conjecture that this holds true more generally.

% In conclusion, we believe that the new stochastic approximation framework presented in this paper can provide a template for novel procedures in iterative estimation and machine learning that are numerically stable and statistically efficient, including parametric and non-parametric approaches.

\section{Acknowledgements}
This work was supported, in part, by the National Science Foundation under grants CAREER IIS-1149662 and IIS-1409177, by the Office of Naval Research under grants YIP N00014-14-1-0485 and N00014-17-1-2131, and by a Shutzer Fellowship to EMA.
Panos Toulis is grateful for the John E. Jeuck Faculty Fellowship at Booth.
The authors wish to thank Leon Bottou, Francis Bach, Adil Salim, Pascal Bianchi, Walid Hachem, and 
participants at the NESS conference for valuable comments and feedback.

\small
\bibliographystyle{Chicago}
\bibliography{refs}

\newpage

\setcounter{assumption}{0}
\setcounter{theorem}{0}
\setcounter{lemma}{0}

\appendix 

\section{Proofs of theorems for main method}
\label{appendix:main}
  \StateAssumptions\
\noindent {\em Note about proofs.} 
A key equation of implicit stochastic approximation is Equation~\eqref{eq:spp0}:
\begin{align}
\label{eq:FP}
\theta_n^+  + \gamma_n h(\theta_n^+) = \theta_{n-1}.
\end{align}
As this fixed-point equation has a unique solution, $\theta_n^+$ is a {\em deterministic
function} of $\theta_{n-1}$.
% By assumption, $H(\thetaMed{n}, \xi_n) = h(\theta_n^+) + \varepsilon_n$,  and so $\Ex\{H(\thetaMed{n}, \xi_n) | \mathcal{F}_{n-1}\} = h(\theta_n^+)$.
    %
  \TheoremConvergenceProof
  \TheoremDevianceProof
  \TheoremMSEProof
  \TheoremNormalityProof

  \section{Proofs for approximate implementations}
\label{appendix:nested}
\noindent {\em Note about proofs.} 
% We define the operators $\chi_n$ and $\zeta_n$ to analyze the nested procedure of Section~\ref{section:impl_nested}. In particular, $\zeta_n(\theta)$ will denote the output of procedure in Equation~\eqref{eq:implementation_a}, which is run for $K$  iterations (a fixed $K$ will be implicitly assumed). 
The procedures analyzed in this section involve two nested iterative processes.
Throughout, we use $n$ as the index variable of the outer iteration and $k$ for
the inner iteration. The randomness entering the $k$th step of the inner
iteration inside the $n$th step of the outer iteration is denoted by $\xi_k^n$
and $\Fn{n,k}$ denotes the $\sigma$-algebra generated by $\{\xi_i^j\}_{1\leq
i\leq K}^{1\leq j\leq n-1}\cup\{\xi_i^n\}_{1\leq i\leq k}$. We also write
$w_k^n$ instead $w_k$ in \eqref{eq:sfp} to explicitely keep track of the outer
iteration index.  Finally, we use $\Fn{n-1}$ as a shorthand for $\Fn{n-1,K}$.

Let $\chi_n(\theta)$ denote the output of the same procedure in the 
theoretical case where $K=\infty$. In other words, $\chi_n$ is the proximal operator 
that satisfies:
\begin{align}
\label{eq:chin}
\chi_n(\theta) + \gamma_n h(\chi_n(\theta)) = \theta.
\end{align}%
  \LemmaChiProof
  \LemmaXiProof

  \TheoremNestedProof

\section{Stability of Proximal Robbins--Monro: Poisson model}
\label{appendix:poisson}

In this section, we investigate empirically the stability of plug-in implementations of 
proximal Robbins--Monro presented for the Poisson model
of Section~\ref{isgd:example_normal}. This model has a non-Lipschitz likelihood, 
so the experiment is meant to test the performance of our theory and methods when our working assumptions do not hold.
We repeat a lot of information from the normal model experiment 
of Section~\ref{sec:simulations} for reader's convenience.

Our simulation setting is as follows. 
As before, we consider parameters $\theta_\star\in\Reals{p}$, $p=6$, such that 
$\theta_{\star, j} = e^{-j}$, for $j=1, \ldots, 6$; and $y|x \sim \mathrm{Pois}(e^{x'\theta_\star})$, 
where $x_{ij}$ takes values $\{0, 1, 2, 3\}$ with probabilities 
$\{0.4, 0.4, 0.15, 0.05\}$, respectively. 
We estimate recursively $\theta_\star$ using the methods of SGD and ISGD as introduced in 
Equations~\eqref{eq:sgd_normal} and~\eqref{eq:isgd_normal}, respectively. 
We also use the stochastic fixed point method of Equation~\eqref{eq:sfp} to estimate $\theta_\star$. As before we set:
$a = \log(L/\mu),~K = 2a(1+\gamma_1 L)^2,~a_k = 2a/K$.
The parameters $L, \mu$ are estimated directly from data. As expected, the estimated $L$ is 
larger in the Poisson model than in the normal model, and the estimated value increases with more observations.
This leads to larger values for $K$ and fewer outer iterations for the stochastic fixed point procedure.

The learning rates for all methods is set as $\gamma_n = \gamma_1/n$ across iterations.
As we vary $\gamma_1$, we replicate datasets of size $N=10,000$ based on the above model setup, 
and run the three procedures above for a total wall clock time of 1 second. For every replication, 
we calculate the trajectory of log mean squared error~(log MSE) of all methods, $m_{j,n} = \|\theta_{j, n} - \theta_\star\|^2$, where $\theta_{j, n}$ is the $n$-th iterate of method $j$ within the data replication.
The ``mean-level" of $m_{j, n}$ corresponds to the level around which the series ``settles''. As 
in the normal model, we calculate this number by fitting an AR(1) model on the series $m_{j, n}$, and then 
calculating the stationary limit $b_1/(1-b_0)$, where $b_1$ is the estimated slope coefficient and $b_0$ is the estimated intercept in the model. 
Second, we also calculate $\max_{i : t_{j,i} \le 1} \{m_{j, i}\}$, of log MSE across iterations, where $t_{j,i}$ is the wall clock time until iteration $i$ for method $j$. 
This max value is a proxy for the sensitivity of our method.

\begin{figure}[t!]
\centering
	\label{figure}
	\includegraphics[width=1\textwidth]{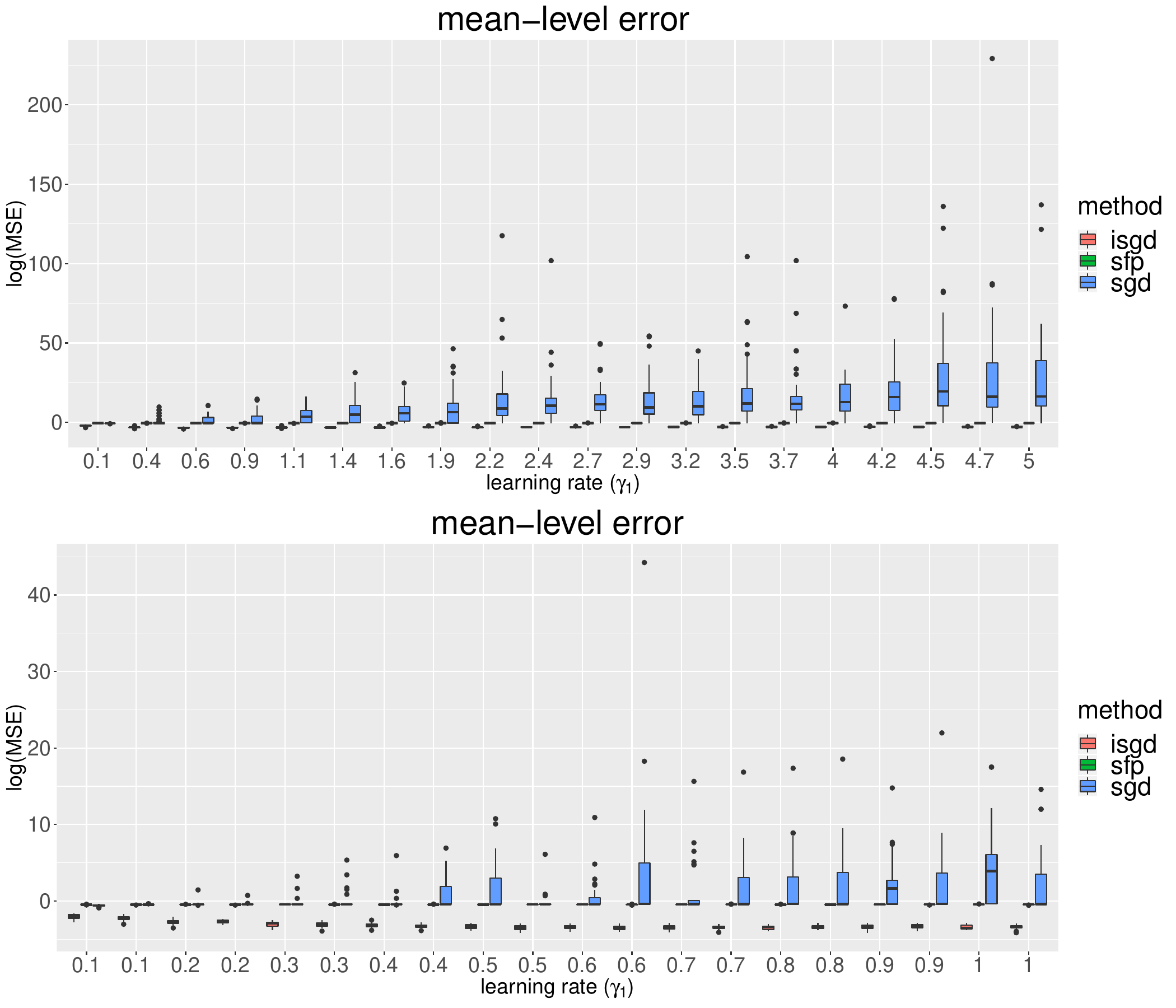} 
	\caption{\textbf{Top:} Boxplots of mean-level log MSE over 50 replications of SGD, ISGD, and 
	stochastic fixed point (SFP). 
{\bf Bottom:} Zoomed-in plot up to $\gamma_1=1$.
Each method is constrained to run for a total of one second of wall clock time.
	}
\label{figure:poisson}
\end{figure}

Figure~\ref{figure:poisson} shows the results of this experiment. 
For any value of $\gamma_1$, 
Figure~\ref{figure:poisson}(top) shows the boxplot of the mean-level log MSE for each method, 
while Figure~\ref{figure:poisson}(bottom) zooms in the mean-level plot for smaller values of $\gamma_1$ (up to .3) We see that both ISGD and SFP are clearly more stable than classical SGD. 
The latter essentially diverges even for small values of the learning rate.

This experiment suggests that the proximal Robbins--Monro methods, including the approximate implementations discussed in this paper, are generally more stable than their classical counterparts, when the theoretical assumptions do not hold.
Specifically, in this example, the likelihood is non-Lipschitz, and we see that classical SGD diverges even with slight misspecifications of the learning rate. While ISGD is known to be stable in generalized linear models~\citep{toulis2014statistical}, we believe it is remarkable 
that the SFP procedure, initialized with the theoretical values suggested by Theorem~\ref{thm:nested}, is stable in this highly non-linear model as well.

\section{Computation of implicit updates}\label{sec:computation}
At a first glance, the computation of the implicit procedure,
$$
\thetaim{n} = \thetaim{n-1} - \gamma_n H(\theta_n, \xi_n),
$$
may appear to be challenging, or even impossible. However, the implementation can actually be 
quite straightforward in a variety of popular models and objectives. The general 
idea is to exploit a special structure $W_\theta$ to simplify the implicit update. 

Specifically, suppose that $H(\theta, \xi) = s(\theta) U$, where $s(\theta)\in\mathbb{R}$ and $U$ is a vector that does not depend on the parameter value, $\theta$.
Then, we can write the implicit update as follows:
$$
\thetaim{n} = \thetaim{n-1} - \gamma_n s({\theta_n}) U_n = 
\thetaim{n-1} -\eta U_n,
$$
for some scalar $\eta$. Thus, we have to solve:
$$
\gamma_n s({\theta_n}) = \eta
\Leftrightarrow
\gamma_n s(\theta_{n-1}- \eta U_n) = \eta.
$$
The problem is now reduced to a one-dimensional fixed-point equation for $\xi$.
In many statistical models, including generalized linear models and M-estimation, 
this fixed point can be efficiently solved through line search due to the structure of $s$.
For instance, Algorithm 1 of \citet{toulis2014statistical} provides a concrete algorithm for  generalized linear models.

\end{document}